\DeclareSymbolFont{cyrletters}{OT2}{wncyr}{m}{n}
\DeclareMathSymbol{\Sha}{\mathalpha}{cyrletters}{"58}
\numberwithin{equation}{section}
\newtheorem{lemma}{Lemma}[section]
\newtheorem{theorem}[lemma]{Theorem}
\newtheorem{proposition}[lemma]{Proposition}
\newtheorem{corollary}[lemma]{Corollary}
\newtheorem{conjecture}{Conjecture}
\theoremstyle{definition}
\newcommand{\Z}{\mathbb{Z}}
\newcommand{\Q}{\mathbb{Q}}
\newcommand{\R}{\mathbb{R}}
\newcommand{\CL}{\mathrm{Cl}}
\newcommand\Gal{\mathrm{Gal}}
\newcommand{\F}{\mathbb{F}}
\title{\vspace{-\baselineskip}\sffamily\bfseries Elliptic curves and spin}
\author[1]{Peter Koymans}
\author[2]{Peter Vang Uttenthal}
\affil[1]{ETH Zurich}
\affil[2]{Aarhus University}
\date{\today}
\begin{document}

\maketitle

\begin{quote}
It is interesting to recall that, in connexion with a lecture by Prof. R. Fueter at the 1932 Zurich Congress, Hilbert asserted that the theory of complex multiplication of elliptic modular functions which forms a powerful link between number theory and analysis, is not only the most beautiful part of mathematics but also of all science.
\flushright{Olga Taussky, \emph{David Hilbert For.Mem.R.S}, Nature, 1943 \cite{olga}}
\end{quote}

\begin{abstract}
In the early 2000s, Ramakrishna asked the question: For the elliptic curve 
$$
E: y^2 = x^3 - x,
$$
what is the density of primes $p$ for which the Fourier coefficient $a_p(E)$ is a cube modulo $p$? As a generalization of this question, Weston--Zaurova \cite{Weston-Zaurova} formulated conjectures concerning the distribution of power residues of degree $m$ of the Fourier coefficients of elliptic curves $E/\Q$ with complex multiplication. In this paper, we prove the conjecture of Weston--Zaurova for cubic residues using the analytic theory of spin. Our proof works for all elliptic curves $E$ with complex multiplication. 
\end{abstract}


\section{Introduction}
For the elliptic curve
$$
E: y^2 = x^3 - x,
$$
with complex multiplication by $\Q(i)$ and error terms
$$
a_p(E) = p+1 - |E(\mathbb{F}_p)|,
$$
are there infinitely many primes $p \equiv 1 \bmod 12$ such that $a_p(E)$ is a cubic residue modulo $p$? This question was formulated by Ramakrishna in the early 2000s. At that time, the question was perceived as interesting yet inaccessible. Note that it is natural to restrict to primes $p \equiv 1 \bmod 12$. Indeed, for $p \equiv 3 \bmod 4$, we have $a_p(E) = p + 1$ if $p \geq 5$, so $a_p(E)$ is a cube modulo $p$. Furthermore, all elements of $\mathbb{F}_p$ are automatically cubes if $p \equiv 2 \bmod 3$, so in particular $a_p(E)$ is once more a cube modulo $p$.

In 2005, Weston \cite{Weston} formulated a family of conjectures inspired by the question, but only for elliptic curves without complex multiplication. In a subsequent paper, Weston and Zaurova considered the analogous conjectures for elliptic curves over $\Q$ with complex multiplication. Write $w_K$ for the number of roots of unity contained in a number field $K$.

\begin{conjecture}[{Weston--Zaurova \cite[Conjecture 3.1]{Weston-Zaurova}}]
\label{WZ conjecture}
Let $E$ be an elliptic curve over $\Q$ with complex multiplication by an imaginary quadratic field $K$. Let $m$ be an integer coprime with $w_K$. Then the density of primes $p \equiv 1 \bmod m$ with $a_p(E) \neq 0$ for which $a_p(E)$ is an $m^{th}$ power residue modulo $p$ is $1/m$. Furthermore, the density is independent of any Chebotarev class $\mathcal{P}$ contained in the set of primes $p\equiv 1 \bmod m$ with $a_p(E)\neq 0$ in the sense that if the primes are restricted to $\mathcal{P}$, the relative density remains $1/m$.
\end{conjecture}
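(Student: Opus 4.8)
The plan is to translate the conjecture into an equidistribution statement for a \emph{spin} attached to primes of $K$, and then to establish that equidistribution by the bilinear‑forms machinery underlying the analytic theory of spin.

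\emph{Step 1: reduction via complex multiplication.} It is classical that $E/\Q$ has complex multiplication by an order $\mathcal{O}$ of class number one in $K$, so $h_K=1$ and every ideal of $\mathcal{O}_K$ is principal. Attach to $E$ its Hecke character $\psi$: for all but finitely many primes $p$ split in $K$, writing $p=\mathfrak{p}\overline{\mathfrak{p}}$ one has $a_p=\psi(\mathfrak{p})+\overline{\psi(\mathfrak{p})}$ where $\pi:=\psi(\mathfrak{p})$ generates $\mathfrak{p}$ and $\pi\overline{\pi}=p$, while $a_p=0$ for $p$ inert or ramified. Thus the primes counted in Conjecture~\ref{WZ conjecture} are, up to finitely many, exactly the split primes with $p\equiv 1\bmod m$, a set of positive density. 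Since $\pi\in\mathfrak{p}$, reduction modulo $\mathfrak{p}$ gives $a_p\equiv\overline{\pi}\pmod{\mathfrak{p}}$; using $p\equiv 1\bmod m$, the integer $a_p$ is an $m$-th power modulo $p$ if and only if $\overline{\pi}$ is an $m$-th power in $(\mathcal{O}_K/\mathfrak{p})^{*}=\F_p^{*}$, i.e.\ if and only if the power residue symbol $\left[\tfrac{\overline{\pi}}{\mathfrak{p}}\right]_m:=\overline{\pi}^{\,(p-1)/m}\bmod\mathfrak{p}\in\mu_m(\F_p)$ equals $1$. Crucially, because $\gcd(m,w_K)=1$, replacing $\pi$ by $\zeta\pi$ with $\zeta\in\mu_{w_K}$ multiplies this symbol by $\left[\tfrac{\overline{\zeta}}{\mathfrak{p}}\right]_m=1$; hence $\mathrm{spin}(\mathfrak{p}):=\left[\tfrac{\overline{\pi}}{\mathfrak{p}}\right]_m$ is well defined, equals $\left[\tfrac{a_p}{\mathfrak{p}}\right]_m$, and is independent of the choice of prime above $p$. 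After fixing an identification $\mu_m(\F_p)\cong\mu_m$, Conjecture~\ref{WZ conjecture} becomes: $\mathrm{spin}(\mathfrak{p})$ is equidistributed in $\mu_m$ as $\mathfrak{p}$ runs over split primes with $N\mathfrak{p}\equiv 1\bmod m$, and stays equidistributed after any Chebotarev condition is imposed.

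\emph{Step 2: character sums and the Chebotarev condition.} By orthogonality the equidistribution is equivalent to $\sum_{N\mathfrak{p}\le x}\chi(\mathrm{spin}(\mathfrak{p}))=o(\pi_K(x))$ for every nontrivial character $\chi$ of $\mu_m$, and $\chi(\mathrm{spin}(\mathfrak{p}))=\left[\tfrac{\overline{\pi}}{\mathfrak{p}}\right]_{M'}^{\,c}$ for a divisor $M'>1$ of $m$ and some $c$ coprime to $M'$; since $M'\mid m$ it is again coprime to $w_K$. So it suffices to prove, for each such $M'$, that $\sum_{N\mathfrak{p}\le x}\left[\tfrac{\overline{\pi}}{\mathfrak{p}}\right]_{M'}\ll x(\log x)^{-A}$. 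A Chebotarev class $\mathcal{P}$ is cut out by a splitting condition in a finite Galois extension $F/\Q$ that we may take to contain $K(\zeta_{M'})$; over $L:=K(\zeta_{M'})$ the field $F$ becomes a compositum of Kummer extensions, so ``$\mathfrak{p}\in\mathcal{P}$'' translates into prescribing the values of finitely many \emph{fixed} power residue symbols $\left[\tfrac{\beta_j}{\mathfrak{p}}\right]_{M'}$ together with congruences. Unlike the spinning symbol $\left[\tfrac{\overline{\pi}}{\mathfrak{p}}\right]_{M'}$, these involve fixed $\beta_j$; so the whole problem is to prove the displayed bound \emph{uniformly} over an arbitrary fixed twist by such symbols and congruences, and this uniformity is exactly what forces the relative density to be independent of $\mathcal{P}$ --- the hypothesis $\gcd(m,w_K)=1$ being precisely the condition preventing a governing field from making that density vary.

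\emph{Step 3: the analytic core.} To detect primes, apply a combinatorial identity of Vaughan / Heath--Brown type, reducing $\sum_{N\mathfrak{p}\le x}\left[\tfrac{\overline{\pi}}{\mathfrak{p}}\right]_{M'}$ (with the fixed twist) to type I sums $\sum_{\mathfrak{d}}\gamma_{\mathfrak{d}}\sum_{N\mathfrak{n}\le x/N\mathfrak{d}}\left[\tfrac{\overline{\pi_{\mathfrak{d}\mathfrak{n}}}}{\mathfrak{d}\mathfrak{n}}\right]_{M'}$ with $\mathfrak{d}$ short, and type II bilinear sums $\sum_{N\mathfrak{m}\sim M}\sum_{N\mathfrak{n}\sim N}\alpha_{\mathfrak{m}}\beta_{\mathfrak{n}}\left[\tfrac{\overline{\pi_{\mathfrak{m}\mathfrak{n}}}}{\mathfrak{m}\mathfrak{n}}\right]_{M'}$ with $M,N$ in a usable window and $\alpha,\beta$ bounded. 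Choosing a generator $\pi_{\mathfrak{m}}\pi_{\mathfrak{n}}$ of $\mathfrak{m}\mathfrak{n}$, multiplicativity gives $\left[\tfrac{\overline{\pi_{\mathfrak{m}}}\,\overline{\pi_{\mathfrak{n}}}}{\mathfrak{m}\mathfrak{n}}\right]_{M'}=\left[\tfrac{\overline{\pi_{\mathfrak{m}}}}{\mathfrak{m}}\right]_{M'}\left[\tfrac{\overline{\pi_{\mathfrak{n}}}}{\mathfrak{n}}\right]_{M'}\left[\tfrac{\overline{\pi_{\mathfrak{m}}}}{\mathfrak{n}}\right]_{M'}\left[\tfrac{\overline{\pi_{\mathfrak{n}}}}{\mathfrak{m}}\right]_{M'}$; the first two factors are univariate, and the cross term is handled by the $M'$-th power reciprocity law in $L$: flipping $\left[\tfrac{\overline{\pi_{\mathfrak{m}}}}{\mathfrak{n}}\right]_{M'}\left[\tfrac{\overline{\pi_{\mathfrak{n}}}}{\mathfrak{m}}\right]_{M'}$ replaces it, up to local symbols at the boundedly many places above the primes dividing $M'$ and above $\infty$ --- absorbed by splitting $\mathfrak{m},\mathfrak{n}$ into fixed ray classes --- by a genuinely entangled symbol in $(\mathfrak{m},\mathfrak{n})$ that oscillates. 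One then fixes a generic $\mathfrak{m}$, bounds the inner sum over $\mathfrak{n}$ by large‑sieve / Burgess‑type estimates for $M'$-th power residue characters over $L$ (the analogue of Heath--Brown's cubic large sieve), obtaining a power saving, and sums trivially over $\mathfrak{m}$; the type I sums are controlled by Polya--Vinogradov and Hecke $L$-function estimates over $L$. Matching the ranges afforded by the combinatorial identity with the bilinear window then closes the argument.

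\emph{Main obstacle.} Essentially all the work lies in the type II estimate for general modulus $M'$. The higher power reciprocity law is combinatorially much heavier than the quadratic or cubic case; the field $L=K(\zeta_{M'})$ typically has class number larger than one and an intricate unit and ray‑class structure, forcing careful bookkeeping of generators and of the local correction factors; and one needs the key character‑sum input --- a large sieve for $M'$-th power residue characters, with uniformity in both the conductor and the modulus of the twist --- which is classical for $M'\in\{2,3\}$ but requires new harmonic‑analytic input for larger $M'$. A secondary obstacle is that the window of bilinear ranges in which the reciprocity‑plus‑large‑sieve bound beats the trivial bound must overlap the ranges that the combinatorial identity makes available; guaranteeing this overlap for every divisor $M'$ of $m$ at once --- rather than only for $M'=3$ --- is the crux separating the method from the conjecture in full generality.
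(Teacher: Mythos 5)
First, a scoping remark: the statement you are proving is the full Weston--Zaurova conjecture for all $m$ coprime to $w_K$, but the paper itself only establishes the case $m=3$ (unconditionally), explicitly stating that general $m$ would require the joint-distribution techniques of Koymans--Milovic and would be conditional on short character sum conjectures. Your proposal honestly stalls at the same place --- you concede that the type II window for general $M'$ is ``the crux separating the method from the conjecture'' --- so what you have is a sketch of the $m=3$ theorem, and for that case your architecture is the paper's: well-definedness of the spin $\bigl[\overline{\pi}/\mathfrak{p}\bigr]_m$ from $\gcd(m,w_K)=1$, the identity $a_p\equiv\overline{\pi}\bmod\mathfrak{p}$, orthogonality, a Vinogradov-type decomposition into type I and type II sums, reciprocity to flip the entangled symbol, and a power-residue large sieve for the bilinear forms.

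The genuine gap, even at $m=3$, is in your type I estimate. After reciprocity the modulus of the character sum over $a$ is (essentially) $\mathfrak{c}\mid\sigma(\beta)-\beta$, an ideal of the degree-$4$ field $L=K(\zeta_3)$ of norm as large as $x^{3/4}$, while the sum over $a$ has length only $x^{1/4}$; ``Polya--Vinogradov and Hecke $L$-function estimates over $L$'' do not beat the trivial bound in this range --- this is exactly why Friedlander--Iwaniec--Mazur--Rubin must assume their conjecture $C_n$ for $n\geq 4$. The paper's essential new input is the field-lowering mechanism (Lemmas \ref{lFL1}--\ref{lFL3}): because $\mathfrak{c}'\mid\sigma\tau(\beta)-\beta$ is the extension of an ideal of $\Q(\sqrt{3d})$, the cubic symbol modulo $\mathfrak{c}'$ degenerates to the indicator $\mathbf{1}_{\gcd(a+\beta,\mathfrak{c}')=(1)}$, and because $\mathfrak{c}\mid\sigma(\beta)-\beta$ comes from $\Q(\zeta_3)$, the remaining symbol descends to a genuine cubic character over the degree-$2$ field $\Z[\zeta_3]$, where unconditional short character sum bounds are available. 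Without this step your proposal proves nothing unconditionally. Two smaller points: your claim that an arbitrary Chebotarev class becomes ``a compositum of Kummer extensions'' over $L$ fails for non-abelian $F/L$ --- both you and the paper can only accommodate splitting conditions in abelian extensions of $K(\zeta_m)$, so the ``independent of any Chebotarev class'' clause of the conjecture is not fully addressed; and to get density exactly $1/m$ (not merely ``symbol trivial with some density'') you need a normalization of $\mu_m(\F_p)\cong\mu_m$ that is uniform in $p$, which is what working with the residue symbol at a fixed-type prime $\mathfrak{p}$ of $K(\zeta_m)$, as in Proposition \ref{spin and a_p}, accomplishes.
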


Note that, for all but finitely many primes, $p\equiv 1 \bmod m$ and the reduction of $E$ at $p$ being ordinary is equivalent to $p$ splitting completely in $K(\zeta_m)$. 

In \cite{Weston} and \cite{Weston-Zaurova}, Weston and Zaurova prove the special cases of the conjecture to which classical techniques are applicable. In particular, they settle the case when $K = \Q(\sqrt{-3})$ and $m=3$ by using that $K$ contains the third roots of unity so that the classical cubic reciprocity law is available. For this reason, we exclude the field $K = \Q(\sqrt{-3})$ in the statements of this paper. 

In contrast, the classical techniques do not apply to the case $K=\Q(i)$ and $m = 3$, and therefore the original question for the curve $E: y^2 = x^3 - x$ remained open. In this paper, we answer Ramakrishna's question in the affirmative, and prove that among the primes $p \equiv 1 \bmod 3$ for which $E: y^2 = x^3 - x$ has ordinary reduction (equivalently, $p \equiv 1 \bmod 4$ for $p \geq 5$), the density of $a_p(E)$ being a cubic residue modulo $p$ is $1/3$. In addition, we prove Conjecture \ref{WZ conjecture} for all elliptic curves with complex multiplication (CM) by an imaginary quadratic field $K = \Q(\sqrt{-d})$ where $d \neq 3$ in the case of cubic power residues (i.e. $m = 3$). We do not need the curves to be defined over $\Q$, nor do we need them to have CM by a maximal order; it suffices that they have CM by some order $\mathcal{O}$ in $K$ and are defined over the ring class field $L$ of $\mathcal{O}$.

To prove the main theorems of this paper, we extend the analytic theory of spin to cubic symbols over totally imaginary biquadratic fields. Our results are \emph{unconditional}, as we do not need to assume any standard conjectures on short character sums. This is atypical for results on spin symbols, which are most often conditional on such conjectures. 

\begin{theorem} 
\label{sieve}
Let $K$ be an imaginary quadratic field satisfying $\gcd(3, w_K) = 1$, or equivalently $K \neq \Q(\zeta_3)$. For any prime $p$ that splits completely in $K$ as $(\pi \overline{\pi})$, where $\overline{\pi}$ is the conjugate of $\pi$, and where $\pi$ lies below a prime ideal $\mathfrak{p}$ in $K(\zeta_3)$, there is a well-defined cubic spin symbol of the form 
$$
[\mathfrak{p}] = \left(\frac{\overline{\pi}}{\mathfrak{p}}\right)_{K(\zeta_3),3}.
$$
Furthermore, there is a constant $C > 0$ such that for all $X \geqslant 100$
$$
\left|\sum_{ N_{K(\zeta_3)/\Q}(\mathfrak{p})\leqslant X} [\mathfrak{p}]\right| \leqslant C X^{1 - \frac{1}{3200}}.
$$
\end{theorem}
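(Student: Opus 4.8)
The plan is to estimate the character sum by the standard bilinear-forms (Vinogradov/Vaughan) strategy adapted to spin symbols, as pioneered by Friedlander--Iwaniec--Mazur--Rubin and developed by Koymans and others. First I would use the arithmetic of the biquadratic field $F = K(\zeta_3)$: since $K \neq \Q(\zeta_3)$, the field $F$ is a totally imaginary biquadratic extension of $\Q$ containing the quadratic subfield $\Q(\zeta_3)$, so the cubic residue symbol $\left(\tfrac{\cdot}{\mathfrak{p}}\right)_{F,3}$ makes sense. The first task is to verify that the symbol $[\mathfrak{p}]$ is genuinely well-defined, i.e.\ independent of the choice of generator $\pi$ of the principal ideal $(\pi\overline{\pi}) = (p)$ in $K$ and of the prime $\mathfrak{p} \mid \pi$ in $F$; this is done by a reciprocity computation in $F$, controlling the contribution of units and of the ambiguity coming from the class group, exactly as in the quadratic spin setting. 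Establishing this "spin is well-defined" step cleanly, including the congruence conditions on $\pi$ that pin down the symbol, is where most of the algebraic bookkeeping lives.

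Next I would sieve: writing $[\mathfrak{p}]$ as a function of the underlying prime $\pi$ in $K$ (equivalently the prime $p$), I would apply Vaughan's identity to the von Mangoldt--weighted sum $\sum_{N(\mathfrak{p}) \le X} [\mathfrak{p}]$ to decompose it into "type I" sums $\sum_{m \le M}\sum_n a_m [\mathfrak{n}m]$ with $m$ ranging over short intervals and smooth coefficients, and "type II" (bilinear) sums $\sum_m \sum_n a_m b_n [\mathfrak{m}\mathfrak{n}]$ with both variables in dyadic ranges. The type I sums are handled by expanding the cubic symbol via Hecke characters / Poisson summation over the lattice $\mathcal{O}_F$, using that the conductor is small and that the relevant exponential sums exhibit square-root cancellation; here one uses only the Pólya--Vinogradov inequality and bounds for Gauss sums, which is what keeps the argument unconditional. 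The type II sums are the crux: one opens the square, applies Cauchy--Schwarz in the variable $m$, and is left to bound a correlation sum $\sum_{n_1, n_2} \sum_m \left(\tfrac{\text{stuff}}{\mathfrak{m}}\right)$ of two cubic symbols with (generically) distinct moduli $n_1, n_2$; by cubic reciprocity in $F$ one flips these into a single cubic symbol modulo $n_1 \overline{n_1} n_2 \overline{n_2}$ evaluated at $m$, and then Pólya--Vinogradov gives cancellation unless $n_1, n_2$ are nearly equal, the diagonal contribution being acceptable. Balancing the ranges $M$ against $X/M$ yields the exponent; the somewhat lossy constant $\frac{1}{3200}$ reflects the combined cost of working in degree $4$, of the Cauchy--Schwarz step, and of using Pólya--Vinogradov rather than a conjectural short-sum bound.

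The main obstacle I expect is the reciprocity flip in the type II sum together with controlling the "conductor" of the resulting character in $F$. Unlike the quadratic case, cubic reciprocity in $F$ carries correction factors supported at the primes above $3$ and involves the primary normalization of the arguments; one must track these uniformly as $n_1, n_2$ vary, ensure the flipped symbol is a character of bounded conductor in the $m$-aspect, and separate out the (acceptably small) diagonal terms $n_1 = n_2$ and the off-diagonal terms where cancellation is genuine. A secondary technical point is that $F/\Q$ is biquadratic rather than cyclic, so one should be careful about which subfield the norms live in and about the unit group $\mathcal{O}_F^\times$ (which is infinite, of rank $1$), handling the units by the usual trick of restricting $\pi$ to a fundamental domain / imposing sign-like congruence conditions. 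Once the bilinear estimate is in place with any power saving, the passage from the von Mangoldt sum back to the sum over prime ideals $\mathfrak{p}$ is routine partial summation, and optimizing the exponents gives the stated bound.
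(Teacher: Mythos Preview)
Your overall architecture matches the paper: Vinogradov's sieve reduces to type~I and type~II sums, well-definedness comes from the unit group of $K$ having order prime to $3$, and the type~II bilinear estimate goes through essentially as you describe (the paper quotes a ready-made large sieve rather than redoing Cauchy--Schwarz plus reciprocity by hand, but the content is the same).

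The gap is in your treatment of the type~I sums. You assert that P\'olya--Vinogradov and Gauss-sum bounds suffice and that this is ``what keeps the argument unconditional.'' In a degree-$4$ field this is precisely what fails in the Friedlander--Iwaniec--Mazur--Rubin framework: after writing $\alpha = a + \beta$ with $a \in \Z$ and $\beta$ in a rank-$3$ sublattice, and flipping by reciprocity to a symbol $\left(\tfrac{a+\beta}{\mathfrak{c}}\right)_{M,3}$, the modulus $\mathfrak{c}$ (a divisor of $\sigma(\beta)-\beta$) is too large relative to the length $\asymp X^{1/4}$ of the $a$-sum for P\'olya--Vinogradov, or even Burgess applied directly in $O_M$, to yield any saving. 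This is exactly why FIMR's results for degree $n \ge 4$ are conditional on a short-character-sum hypothesis $C_n$; your proposal, as written, would inherit that conditionality.

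The device you are missing is a \emph{field lowering} step exploiting the biquadratic structure of $M = K(\zeta_3)$. One checks that the ideal $\mathfrak{c}$ arising from $\sigma$ is $\sigma$-invariant and hence descends to $\Z[\zeta_3]$, while the companion ideal $\mathfrak{c}'$ arising from $\sigma\tau$ descends to $\Q(\sqrt{3d})$. Because $\sigma\tau$ inverts $\zeta_3$, the cubic symbol against $\mathfrak{c}'$ collapses to a coprimality indicator and disappears entirely; and against the squarefree part $\mathfrak{q}$ of $\mathfrak{c}$ one may replace $a+\beta$ by a rational integer $a+b$, reducing to a cubic character over $\Z[\zeta_3]$ evaluated at rational integers. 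Only after this descent does the resulting short character sum become tractable by classical bounds over $\Q$, and this is what makes the theorem unconditional.
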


One readily checks that the symbol does not depend on the choice of generator $\pi$, see Proposition \ref{spin and a_p}. The last part of Theorem \ref{sieve} lies much deeper and its proof occupies the majority of this paper.

As a corollary, for any elliptic curve $E$ with complex multiplication by an imaginary quadratic field $K$, we prove asymptotics with a power saving error term for the primes $p$ that split completely in $K(\zeta_3)$ such that $a_p(E)$ is a cubic residue modulo $p$. Let us now explain what we mean by $a_p(E)$ in this context.

Let $E$ be an elliptic curve with CM by an order $\mathcal{O}$ in an imaginary quadratic field $K$ and let $L$ be the ring class field of $\mathcal{O}$. Assume that $E$ is defined over $L$. If we take a prime $\mathfrak{p}$ above $p$ where $E$ has good reduction, we may introduce the quantity
\begin{align}
\label{eapEgeneral}
a_\mathfrak{p}(E) = p + 1 - |E(O_L/\mathfrak{p})|.
\end{align}
If $\mathcal{O}^\times = \{\pm 1\}$ and if $\mathfrak{q}$ is another prime of $L$ above $p$, it is a classical result of Deuring that $a_{\mathfrak{p}}(E)^2 = a_{\mathfrak{q}}(E)^2$, i.e. $a_\mathfrak{p}(E)$ is determined up to sign (for a reference, see our Lemma \ref{magic}). Therefore $a_\mathfrak{p}(E)$ being a cube modulo $p$ does not depend on the choice of prime $\mathfrak{p}$. A somewhat more involved argument gives the same result if $\mathcal{O}^\times = \{\pm 1, \pm i\}$, see Proposition \ref{spin and a_p}.

Since it is possible to impose arbitrary congruence conditions when handling spin symbols, we are able to get density results even when imposing arbitrary \emph{abelian splitting conditions}.

\begin{corollary} 
\label{ellipticcurvedensity}
Let $E$ be an elliptic curve with complex multiplication by an order $\mathcal{O}$ in an imaginary quadratic field $K$ different from $\Q(\zeta_3)$. Suppose $E$ is defined over the ring class field $L$ of $\mathcal{O}$. Then 
$$
\frac{\# \{p\leqslant X, p\equiv 1 \bmod 3,  p \textup{ splits in } L: a_{\mathfrak{p}}(E) \textup{ is a cube modulo } p\}}{\#\{p\leqslant X, p\equiv 1 \bmod 3, p \textup{ splits in } L\}}
=
\frac{1}{3} + O\left(\frac{1}{X^{1/3200}}\right).
$$
Here $a_{\mathfrak{p}}(E)$ is defined as in equation \eqref{eapEgeneral} with $\mathfrak{p}$ an arbitrary prime of $L$ above $p$.

If $\mathcal{P}$ is a Chebotarev class of rational primes defined by a splitting condition in an abelian extension of $K(\zeta_3)$ containing $L$ and projecting trivially in $\Gal(L/\Q)$, then the result remains valid when the set of primes is restricted to $p$ with $p \in \mathcal{P}$.
\end{corollary}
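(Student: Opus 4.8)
The plan is to deduce the corollary from Theorem \ref{sieve} by a standard equidistribution-via-cancellation argument. First I would use Proposition \ref{spin and a_p} to translate the arithmetic condition ``$a_{\mathfrak{p}}(E)$ is a cube modulo $p$'' into a statement about the cubic spin symbol $[\mathfrak{p}] = \left(\tfrac{\overline{\pi}}{\mathfrak{p}}\right)_{K(\zeta_3),3}$: for a prime $p$ splitting completely in $L$ with $p \equiv 1 \bmod 3$ and $E$ ordinary at $p$, the value $a_{\mathfrak{p}}(E)$ being a cubic residue should be equivalent to $[\mathfrak{p}] = 1$, i.e.\ to $[\mathfrak{p}]$ lying in the trivial subgroup of $\mu_3$. (One must also check, again via that proposition, that this does not depend on the choice of $\mathfrak{p} \mid p$ nor on $\pi$.) Then, since $[\mathfrak{p}] \in \mu_3$, the indicator of $[\mathfrak{p}] = 1$ is $\tfrac{1}{3}\sum_{j=0}^{2} [\mathfrak{p}]^j$, so the numerator count becomes
$$
\frac{1}{3}\sum_{j=0}^{2} \ \sum_{\substack{N_{K(\zeta_3)/\Q}(\mathfrak{p}) \leqslant X \\ \mathfrak{p} \text{ in the right class}}} [\mathfrak{p}]^j.
$$
The $j=0$ term gives the main term $\tfrac{1}{3}\cdot(\text{denominator})$; the terms $j=1,2$ are bounded by $CX^{1-1/3200}$ using Theorem \ref{sieve} (applied to the symbol and to its complex conjugate, which is again a cubic spin symbol of the allowed form). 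Dividing by the denominator, which is $\gg X/\log X$ by Chebotarev, yields the claimed $\tfrac13 + O(X^{-1/3200})$ after absorbing the $\log X$ into a slightly worse but still admissible exponent, or more cleanly by noting $X^{-1/3200}\log X \ll X^{-1/3200 + \varepsilon}$ and then renaming the exponent.

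For the second assertion, I would observe that imposing $p \in \mathcal{P}$, where $\mathcal{P}$ is cut out by a splitting condition in an abelian extension $M/K(\zeta_3)$ with $M \supseteq L$ and $\Gal(M/K(\zeta_3)) \to \Gal(L/\Q)$ trivial on the relevant piece, amounts to restricting $\mathfrak{p}$ to certain residue classes / ray-class characters. Concretely, the indicator of $\mathfrak{p}$ lying in the prescribed Frobenius class in $\Gal(M/K(\zeta_3))$ expands as a short linear combination of abelian characters of that Galois group, and each such character, when multiplied against $[\mathfrak{p}]^j$, produces a twisted cubic spin symbol. The key point is that Theorem \ref{sieve} — as the phrase ``it is possible to impose arbitrary congruence conditions when handling spin symbols'' in the text signals — is proved with enough uniformity to accommodate these twists; I would cite the version of the sieve result that allows an auxiliary ray-class character (this should be the general form established in the body of the paper, of which Theorem \ref{sieve} is the untwisted case). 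The compatibility condition that $\mathcal{P}$ projects trivially in $\Gal(L/\Q)$ is exactly what guarantees the restriction to $\mathcal{P}$ is genuinely a condition on $\mathfrak{p}$ in $K(\zeta_3)$ (or a small extension) rather than an obstruction, and also that the main term is unaffected except for the harmless change in the normalizing constant.

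The main obstacle I anticipate is \emph{not} the cancellation — that is imported wholesale from Theorem \ref{sieve} — but rather the bookkeeping in the first step: making precise the dictionary between $a_{\mathfrak{p}}(E)$ and the spin symbol when $\mathcal{O}^\times \neq \{\pm 1\}$ (the case $K = \Q(i)$), where $a_{\mathfrak{p}}(E)$ is only well-defined up to a unit and one must verify that ``being a cube mod $p$'' is nonetheless well-defined and correctly captured by $[\mathfrak{p}]$. This is precisely the content deferred to Proposition \ref{spin and a_p}, so in the write-up I would simply invoke it, but it is the conceptual heart of why the corollary follows. A secondary technical point is ensuring the exclusion of the finitely many bad primes (bad reduction, ramification in $L(\zeta_3)/\Q$, primes dividing the conductor of $\mathcal{O}$ or $w_K$) changes neither density, which is immediate since there are $O(1)$ of them.
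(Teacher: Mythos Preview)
Your proposal is correct and follows the same route as the paper: invoke Proposition~\ref{spin and a_p} to convert ``$a_{\mathfrak p}(E)$ is a cube mod $p$'' into ``$[\mathfrak p]=1$'', expand the indicator as $\tfrac{1}{3}\sum_{j=0}^{2}[\mathfrak p]^j$, and kill the $j=1,2$ terms with the power-saving bound on spin sums.

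One small sharpening: already for the \emph{first} part you need the congruence-flexible version of the sieve (Theorem~\ref{tMainAnalytic} with the weights $r_i(\cdot,\mu,\mathfrak M)$), not Theorem~\ref{sieve} as stated. The restriction ``$p$ splits in $L$'' is not automatic from splitting in $K(\zeta_3)$ when $\mathcal O$ has conductor $f>1$; it is the condition that the principal prime $\mathfrak q=(\pi)$ of $K$ admits a generator in $\mathcal O$, which the paper encodes by taking $\mathfrak M=fO_M$ and summing over those $\mu\in(O_M/fO_M)^\ast$ whose norm to $(O_K/fO_K)^\ast$ lies in the subgroup generated by $O_K^\ast$ and integers prime to $f$. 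You clearly anticipate this mechanism in your discussion of the second assertion, so just move that citation forward rather than appealing to Theorem~\ref{sieve} alone.
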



For an imaginary quadratic field $K$ and a prime $p$, we let $K^{(m)}(p)$ be the maximal abelian extension of $K$ killed by $m$ and unramified away from $p$. We prove the following density theorem on the splitting of primes $p$ in the number fields $K^{(m)}(p)$ that depend on $p$. In the statement of Theorem \ref{K^p}, the splitting condition on $p$ is imposed to control the size of the maximal abelian extension of $K$ unramified away from $p$. 

Let $m$ be a prime and let $\zeta_m$ be a primitive $m^{\operatorname{th}}$ root of unity. Let $F$ be a number field, and define 
$$
V_\emptyset = \{ x\in F^\times : (x) = J^m \text{ for some fractional ideal }J \},
$$
where $(x)$ is the fractional ideal generated by $x\in F^\times$. We define the $m$-governing field of $F$ to be the field $F(\zeta_m, \sqrt[m]{V_\emptyset}).$

\begin{theorem}[Splitting in number fields that depend on the prime] 
\label{K^p} 
Let $K$ be an imaginary quadratic field and let $m \geq 3$ be a prime such that $\gcd(m, w_K) = 1$. Let $\mathcal{O}$ be an order in $K$, and let $p$ be a prime that splits completely in the ring class field of $\mathcal{O}$ and the $m$-governing field of $K$. Let $K^{(m)}(p)$ be the maximal $m$-elementary abelian extension of $K$ unramified away from $p$, and let $f(p, K^{(m)}(p)/\Q)$ be the inertial degree at $p$ in $K^{(m)}(p)/\Q$. Then $[\mathfrak{p}] = 1$ if and only if $f(p, K^{(m)}(p)/\Q) = 1$. 

In particular, for $m = 3$, among the primes $p$ splitting completely in the ring class field of $\mathcal{O}$ and the $3$-governing field, the set of primes $p$ of residue field degree $1$ at any prime in $K^{(3)}(p)$ above $p$ has density $1/3$. 
\end{theorem}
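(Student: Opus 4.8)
The plan is to reduce Theorem~\ref{K^p} to Theorem~\ref{sieve} by identifying the cubic spin symbol $[\mathfrak{p}]$ with the splitting behavior of $p$ in the $p$-ramified abelian extension $K^{(m)}(p)$. The first step is to describe $K^{(m)}(p)$ explicitly via class field theory. Since $K$ has class number issues controlled by the ring class field and $p$ splits completely there, and since $p$ splits completely in the $m$-governing field $K(\zeta_m,\sqrt[m]{V_\emptyset})$, the ray class group of $K$ modulo $\mathfrak{p}^\infty$ (or a suitable conductor at $\mathfrak{p}$) has a controlled $m$-torsion quotient. Concretely, writing $p\mathcal{O}_K = \mathfrak{p}\overline{\mathfrak{p}}$, the maximal $m$-elementary abelian extension of $K$ unramified outside $p$ should, under the splitting hypotheses, be cut out by Kummer theory from $\sqrt[m]{\overline{\pi}}$ together with the everywhere-unramified part coming from $V_\emptyset$ — and the latter is trivialized by the assumption that $p$ splits in the $m$-governing field. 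So $K^{(m)}(p) = K(\zeta_m, \sqrt[m]{\overline{\pi}})$ up to a piece already split at $p$.

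The second step is the local-to-global computation of the inertial degree. Once $K^{(m)}(p) \sim K(\zeta_m,\sqrt[m]{\overline{\pi}})$, the inertial degree $f(p, K^{(m)}(p)/\Q)$ at $p$ is $1$ precisely when $\mathfrak{p}$ (a prime of $K(\zeta_m)$ above $p$) splits completely in $K(\zeta_m,\sqrt[m]{\overline{\pi}})/K(\zeta_m)$, which by the definition of the $m$-th power residue symbol happens exactly when $\left(\frac{\overline{\pi}}{\mathfrak{p}}\right)_{K(\zeta_m),m} = 1$, i.e. $[\mathfrak{p}] = 1$. Here I must be careful that $p$ splits completely in $K(\zeta_m)/\Q$ (which follows from $p \equiv 1 \bmod m$ and $p$ split in $K$, guaranteed by the splitting hypotheses) so that the inertial degree over $\Q$ and the residue degree of the Kummer extension over $K(\zeta_m)$ agree, and also that $\overline{\pi}$ is a unit at $\mathfrak{p}$ so the symbol is defined — this uses that $\mathfrak{p}$ lies above $\pi$, not $\overline{\pi}$, exactly as set up in Theorem~\ref{sieve}. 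I also need the independence of the symbol from the choice of generator $\pi$, which the excerpt grants via Proposition~\ref{spin and a_p}.

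The third step is purely formal: by Theorem~\ref{sieve}, $\sum_{N_{K(\zeta_3)/\Q}(\mathfrak{p})\le X} [\mathfrak{p}] = O(X^{1-1/3200})$, and since $[\mathfrak{p}]$ takes the three cube-root-of-unity values equidistributed on average (the cancellation in the first-power sum forces each value class to have density $1/3$ by the standard orthogonality argument over the cubic character of the group $\mu_3$), the set of $\mathfrak{p}$ with $[\mathfrak{p}] = 1$ has density $1/3$ among primes of $K(\zeta_3)$ of degree $1$; pushing down to $\Q$ via the bijection between degree-one primes $\mathfrak{p}$ of $K(\zeta_3)$ and rational primes $p$ split completely in the relevant fields gives the claimed density $1/3$. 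Combined with Step~2's equivalence $[\mathfrak{p}]=1 \iff f(p,K^{(m)}(p)/\Q)=1$, this yields the ``in particular'' assertion for $m=3$.

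The main obstacle I anticipate is Step~1: pinning down $K^{(m)}(p)$ precisely enough. The subtlety is that ``maximal $m$-elementary abelian unramified away from $p$'' includes contributions from the $m$-part of the class group of $K$ and from $m$-units, not just the single Kummer generator $\overline{\pi}$; the role of the hypotheses ``$p$ splits in the ring class field of $\mathcal{O}$'' and ``$p$ splits in the $m$-governing field'' is exactly to force all those extra contributions to be either trivial or already split at $p$, so that the inertial degree is governed solely by the spin symbol. Making this reduction rigorous requires a careful genus-theory / Kummer-theory bookkeeping of the relevant $\mathbb{F}_m$-vector space of Kummer generators and a verification that raising the conductor to $\mathfrak{p}$ (rather than $\mathfrak{p}\overline{\mathfrak{p}}$) does not change the story because complex conjugation swaps the two primes and the symbol at $\overline{\mathfrak{p}}$ is the conjugate of the symbol at $\mathfrak{p}$. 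Once that structural lemma is in place, Steps~2 and~3 are routine.
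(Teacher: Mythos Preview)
Your strategy is sound and reaches the same conclusion, but the paper takes a different route for the algebraic equivalence $[\mathfrak{p}]=1 \iff f(p,K^{(m)}(p)/\Q)=1$. Rather than Kummer theory, the paper works entirely on the class-field-theory side: it first uses the tame Gras--Munnier theorem (Lemma~\ref{tamegrasmunnier}) to decompose $K^{(m)}(p) = K^{(m)}(\mathfrak{q})\,K^{(m)}(\overline{\mathfrak{q}})\,H_K^{(m)}$, then shows (Lemma~\ref{lSplit1}) that the inertial degree at $p$ is governed solely by $f(\mathfrak{q},K^{(m)}(\overline{\mathfrak{q}})/K)$ since $\mathfrak{q}$ is totally ramified in $K^{(m)}(\mathfrak{q})$ and split in $H_K^{(m)}$, and finally (Lemma~\ref{lSplit2}) identifies this residue degree with the spin symbol via the Artin map for the ray class group of modulus $\overline{\mathfrak{q}}$: the Frobenius of $\mathfrak{q}$ is trivial there iff $\kappa$ is an $m$-th power modulo $\overline{\mathfrak{q}}$, which one then lifts to $M=K(\zeta_m)$ and hits with $\sigma$. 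This avoids ever naming a Kummer generator for $K^{(m)}(\overline{\mathfrak{q}})$ and so bypasses the eigenspace bookkeeping you anticipate in your final paragraph. Your Kummer approach would also work, but your literal equation $K^{(m)}(p)=K(\zeta_m,\sqrt[m]{\overline{\pi}})$ is false as written: the right side is not abelian over $K$, and $K(\zeta_m)/K$ is ramified at $m$ so cannot sit inside $K^{(m)}(p)$. What you really mean is to compare composita with $K(\zeta_m)$ and then test splitting of $\mathfrak{p}$ there, which is precisely what the paper does implicitly when it passes to the residue symbol in $M$.

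For the density statement, Theorem~\ref{sieve} by itself does not let you restrict to primes splitting in the ring class field and in the $3$-governing field; the paper instead invokes the refined Theorem~\ref{tMainAnalytic}, whose congruence weight $r_i(\mathfrak{p},\mu,\mathfrak{M})$ encodes exactly such abelian splitting conditions over $K(\zeta_3)$. Your orthogonality argument in Step~3 is otherwise correct (the sum of $[\mathfrak{p}]^2=\overline{[\mathfrak{p}]}$ is just the complex conjugate, so one power-saving bound suffices).
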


In this theorem, we are using implicitly that the density results on spin symbols remain valid when the class of primes considered is restricted to Chebotarev classes defined by Chebotarev conditions in abelian extensions of $K(\zeta_3)$. We expect that it is possible to prove the Weston--Zaurova conjecture for all power residues $m$ by applying techniques from the joint distribution of spins from Koymans--Milovic \cite{KM2}. However, such results would be conditional on short character sum conjectures. 

Below, we state a corollary of Theorem \ref{sieve}. 

\begin{corollary} 
\label{galois}
Let $\ell$ be a prime, and let 
$$
\rho: \Gal(\overline{\Q}/\Q) \longrightarrow \operatorname{GL}(2, \Z_\ell)
$$
be a Galois representation arising from the Galois action on $\ell^\infty$-torsion points of an elliptic curve over $\Q$ with complex multiplication by an imaginary quadratic field $\Q(\sqrt{-d})$ where $d \neq -3$. Let $S$ be the smallest set of places containing $\ell$ and the archimedean place of $\Q$ such that $\rho$ is unramified outside $S$. For primes $p \not \in S$, let $\operatorname{Frob}_p$ be any lift of Frobenius to $\Gal(\overline{\Q}_p/\Q_p)$. Among the primes $p\equiv 1 \bmod 3$ splitting completely in the ring class field of $\operatorname{End}_{\mathbb{C}}(E)$,
$$
\operatorname{trace}(\rho(\operatorname{Frob}_p)) \in \Z
$$
is a cube modulo $p$ one third of the time. 
\end{corollary}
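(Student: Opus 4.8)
The plan is to deduce Corollary \ref{galois} from Corollary \ref{ellipticcurvedensity} by translating the $\ell$-adic statement into the arithmetic of $a_p(E)$; two standard dictionary facts do the translation. First, since $E/\Q$ has complex multiplication by the order $\mathcal{O} := \operatorname{End}_{\mathbb{C}}(E)$ in $K = \Q(\sqrt{-d})$ (with $K \neq \Q(\zeta_3)$, which is the meaning of the hypothesis $d\neq -3$), its $j$-invariant lies in $\Q$; because $j(E)$ generates the ring class field of $\mathcal{O}$ over $K$, this forces $K(j(E)) = K$, hence $h(\mathcal{O}) = 1$, so the ring class field $L$ of $\mathcal{O}$ equals $K$ and $E$ is trivially defined over $L = K$. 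Thus the condition ``$p$ splits completely in the ring class field of $\operatorname{End}_{\mathbb{C}}(E)$'' appearing in Corollary \ref{galois} is precisely the hypothesis ``$p$ splits in $L$'' of Corollary \ref{ellipticcurvedensity} with $L = K$. Second, for $p \notin S$ the representation $\rho$ is unramified at $p$ by N\'eron--Ogg--Shafarevich, so $\rho(\operatorname{Frob}_p)$ is a well-defined conjugacy class, independent of the chosen lift of Frobenius; moreover the characteristic polynomial of $\rho(\operatorname{Frob}_p)$ on the Tate module $T_\ell E$ is $T^2 - a_p(E)\, T + p$, where $a_p(E) = p + 1 - |E(\F_p)| \in \Z$ does not depend on $\ell$. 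In particular $\operatorname{trace}(\rho(\operatorname{Frob}_p)) = a_p(E)$, a well-posed integer quantity.

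Next I would match $a_p(E)$ with the CM-normalized quantity $a_{\mathfrak p}(E)$ of \eqref{eapEgeneral}. For $p$ splitting completely in $L = K$ and $\mathfrak p$ a prime of $K$ above $p$, one has $O_K/\mathfrak p \cong \F_p$, and since $E$ is defined over $\Q$ its reduction modulo $\mathfrak p$ is just the reduction of $E$ modulo $p$; hence $a_{\mathfrak p}(E) = p + 1 - |E(O_K/\mathfrak p)| = p + 1 - |E(\F_p)| = a_p(E) = \operatorname{trace}(\rho(\operatorname{Frob}_p))$. Consequently the numerator set of Corollary \ref{galois}, after removing the finitely many primes in $S$ (which contain the bad-reduction primes of $E$ excluded from Corollary \ref{ellipticcurvedensity} as well as $\ell$), coincides with the set of $p \leqslant X$ with $p \equiv 1 \bmod 3$, $p$ split in $L$, and $a_{\mathfrak p}(E)$ a cube modulo $p$. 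Invoking Corollary \ref{ellipticcurvedensity} then shows this set has relative density $\tfrac13$ among $p \equiv 1 \bmod 3$ splitting in $L$, with error $O(X^{-1/3200})$; deleting the finitely many primes of $S$ from numerator and denominator changes each by $O(1)$ and hence does not affect the limiting density. This proves the corollary.

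I do not expect a genuine obstacle here: all the analytic difficulty is already packaged into Theorem \ref{sieve} and Corollary \ref{ellipticcurvedensity}. The only points that demand (routine) care are verifying that the quantity in \eqref{eapEgeneral} specializes to the classical $a_p(E)$ when $L = K$ and $E$ is defined over $\Q$, and confirming --- via N\'eron--Ogg--Shafarevich and the classical description of the action of Frobenius on $T_\ell E$ --- that $\operatorname{trace}(\rho(\operatorname{Frob}_p))$ is independent both of the lift of Frobenius and of $\ell$, so that the statement is well-defined in the first place. If one wished, the same argument delivers the corollary for the larger families of primes permitted by the Chebotarev refinement in Corollary \ref{ellipticcurvedensity}, but this is not needed for the statement as given.
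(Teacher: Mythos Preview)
Your proposal is correct and follows essentially the same route as the paper: identify $\operatorname{trace}(\rho(\operatorname{Frob}_p))$ with $a_p(E)$ via the standard description of Frobenius on the Tate module (the paper cites \cite[Theorem V.2.3.1]{Silverman1} for this), then invoke Corollary~\ref{ellipticcurvedensity}. Your additional observation that $E/\Q$ with CM forces $h(\mathcal{O})=1$ and hence $L=K$ is a nice touch that the paper leaves implicit, though it is not strictly needed: since $E$ is defined over $\Q\subseteq L$ and $p$ splits completely in $L$, one has $a_{\mathfrak p}(E)=a_p(E)$ regardless.
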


Our method of proof of Theorem \ref{sieve} relies on a version of Vinogradov's sieve extended to totally imaginary biquadratic fields. In \cite{FIMR}, Friedlander--Iwaniec--Mazur--Rubin introduced quadratic spin symbols of primes in totally real, cyclic number fields (satisfying some additional technical conditions) and proved equidistribution of such symbols by extending Vinogradov's sieve to totally real, cyclic fields. In general, their results are conditional on standard conjectures on short character sums (denoted $C_n$ in their paper where $n$ is the degree of the totally real cyclic number field), and their results are only unconditional in the case $n = 3$. In this paper, we extend the methods in \cite{FIMR} to \emph{cubic} spin symbols over biquadratic, totally imaginary fields of degree 4 over $\Q$. It is natural to compare this to work of Merikoski \cite{Meri}, although we should warn the reader that his spin symbol is not the same as ours.

Our main analytic accomplishment is to make our equidistribution results unconditional in this setting. To make our results unconditional, we follow a similar strategy strategy as employed previously by Koymans--Milovic \cite{KM1} and Piccolo \cite{Piccolo}. However, unlike the works \cite{KM1, Piccolo}, we do not assume that our biquadratic field has class number $1$ and we deal with cubic symbols instead of quartic symbols.

\subsection{Acknowledgements}
The authors are thankful to Ravi Ramakrishna and Fabien Pazuki for helpful conversations. The first author gratefully acknowledges the support of Dr. Max R\"ossler, the Walter Haefner Foundation and the ETH Z\"urich Foundation.

\section{General spin symbols}
In this section we introduce our cubic spin symbol of interest, and relate it to $a_p(E)$. This symbol is a natural generalization of the spin symbol introduced in \cite{FIMR}. In fact, it turns out that the algebraic part of our theory readily generalizes to all $m$, not just $m = 3$. We start by identifying a family of spin symbols, indexed by $m$, capturing whether the error terms $a_p(E)$ are power residues modulo $m$. This is achieved in Proposition \ref{spin and a_p}. We will then relate this to splitting of $p$ in a number field depending on $p$, see Lemma \ref{lSplit2}.

\subsection{\texorpdfstring{Power residues of degree $m$ and spin}{Power residues of degree m and spin}}
Let $K$ be an imaginary quadratic field with maximal order $\mathcal{O}_K$. For any order $\mathcal{O}$ of conductor $f$ in $K$, there is a unique abelian extension $L/K$ called the ring class field of $\mathcal{O}$. We refer to \cite[p. 179]{cox} for the definition of ring class fields.

We write $\sigma$ for the non-trivial element of $\Gal(K/\Q)$. To keep our notation brief, we shall frequently use conjugation to denote this automorphism, so that we may speak of $\bar{\kappa}$ for elements $\kappa$ but also of $\bar{\mathfrak{p}}$ for (prime) ideals $\mathfrak{p}$.



\begin{lemma} 
\label{magic}
Let $K$ be an imaginary quadratic field and let $\mathcal{O}$ be an order in $K$. 
Let $L$ be the ring class field of $\mathcal{O}$, and let $E$ be an elliptic curve over $L$ with $\operatorname{End}_\mathbb{C}(E) = \mathcal{O}$.
Let $p$ be a prime that splits completely in $L$ and $\mathfrak{p}$ be a prime in $L$ above $p$
(in particular, $O_L/\mathfrak{p} \simeq \F_p$).
Suppose $E$ has good reduction at $\mathfrak{p}$. 
Then there is $\kappa \in \mathcal{O}$ such that $p = \kappa \overline{\kappa}$ and 
$$
a_\mathfrak{p}(E) =  p + 1 - |E(O_L/\mathfrak{p})| = \kappa + \overline{\kappa}.
$$
\end{lemma}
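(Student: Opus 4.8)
The plan is to reduce to the classical theory of complex multiplication over the ring class field. Since $E/L$ has $\operatorname{End}_{\mathbb{C}}(E) = \mathcal{O}$ and $L$ is the ring class field of $\mathcal{O}$, the curve $E$ has good reduction at all but finitely many primes, and by the theory of CM (see Silverman, \emph{Advanced Topics}, Ch. II, or Cox, Ch. 10--11) the reduction $\widetilde{E}$ at a prime $\mathfrak{p}$ of $L$ of good reduction lying above a rational prime $p$ that splits completely in $L$ has its Frobenius endomorphism $\phi_{\mathfrak{p}} \in \operatorname{End}(\widetilde{E}) \cong \mathcal{O}$ equal to an element $\kappa \in \mathcal{O}$ of norm $\kappa\overline{\kappa} = \deg \phi_{\mathfrak{p}} = |O_L/\mathfrak{p}| = p$ (the last equality because $p$ splits completely in $L$, so $O_L/\mathfrak{p} \cong \mathbb{F}_p$). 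This is where one uses that $p$ splits completely in $L$: it forces $\widetilde{E}$ to be defined over $\mathbb{F}_p$ and forces $\mathfrak{p}$ to split in $KL = L$, so that $\kappa$ genuinely lies in $\mathcal{O} \subset K$ rather than in some larger order.

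Next I would read off the point count. The standard formula $|\widetilde{E}(\mathbb{F}_p)| = \deg(\phi_{\mathfrak{p}} - 1) = (\kappa - 1)(\overline{\kappa} - 1) = \kappa\overline{\kappa} - \kappa - \overline{\kappa} + 1 = p + 1 - (\kappa + \overline{\kappa})$ gives exactly
$$
a_{\mathfrak{p}}(E) = p + 1 - |E(O_L/\mathfrak{p})| = \kappa + \overline{\kappa},
$$
which is the assertion. The identity $|\widetilde{E}(\mathbb{F}_p)| = \deg(1 - \phi_{\mathfrak{p}})$ is the usual fact that $\widetilde{E}(\mathbb{F}_p) = \ker(1 - \phi_{\mathfrak{p}})$ together with separability of $1 - \phi_{\mathfrak{p}}$ (its derivative is $1 \neq 0$), and $\deg$ on $\operatorname{End}(\widetilde{E})$ corresponds to the norm form on $\mathcal{O}$ under the identification $\operatorname{End}(\widetilde{E}) \cong \mathcal{O}$.

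The main obstacle is the bookkeeping needed to ensure $\operatorname{End}(\widetilde{E}) \cong \mathcal{O}$ (and not a strictly larger order) and that the identification is compatible with complex conjugation, so that "$\kappa + \overline{\kappa}$" is unambiguous and lands in $\mathbb{Z}$; this is where the hypothesis that $L$ is the full ring class field of $\mathcal{O}$ and that $p$ splits completely in $L$ is essential, since it is exactly the condition guaranteeing that reduction at $\mathfrak{p}$ induces an isomorphism $\operatorname{End}_{\mathbb{C}}(E) \xrightarrow{\sim} \operatorname{End}_{\mathbb{F}_p}(\widetilde{E})$ (this is a theorem of Deuring; one can cite Lang, \emph{Elliptic Functions}, Ch. 13, or Silverman, \emph{Advanced Topics}, Thm. II.(bad reduction excluded)). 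Everything else is a direct computation. I would also remark that $\kappa$ is unique up to replacing it by $\overline{\kappa}$ and, when $\mathcal{O}^\times = \{\pm 1\}$, up to sign — consistent with the statement that $a_{\mathfrak{p}}(E)$ is determined only up to sign when passing to a different prime $\mathfrak{q}$ above $p$, as this corresponds to conjugating the Frobenius.
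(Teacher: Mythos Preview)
Your argument is correct; the paper itself does not prove this lemma but simply cites \cite[Theorem~14.16]{cox}, and what you have sketched is essentially the content of that theorem's proof. Your identification of the main obstacle (that Frobenius on the reduction lies in $\mathcal{O}$, via Deuring) is the substantive point, and your computation of $a_{\mathfrak p}(E)=\kappa+\overline\kappa$ from $|\widetilde E(\mathbb F_p)|=\deg(1-\phi_{\mathfrak p})$ is the standard one.
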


\begin{proof}
See \cite[Theorem 14.16]{cox}.
\end{proof}

\begin{proposition} 
\label{spin and a_p}
Let $K = \Q(\sqrt{D})$ be an imaginary quadratic field and let $m \geq 3$ be any odd integer such that $\gcd(m, w_K) = 1$. Let $M = \Q(\sqrt{D}, \zeta_m)$ and let 
$$
\Gal(M/\Q) \simeq \Gal(K/\Q) \times \Gal(\Q(\zeta_m)/\Q) 
= \{1, \sigma \} \times (\Z/m\Z)^\times.
$$
Let $\mathcal{O}$ be an order in $K$ and let $E$ be an elliptic curve defined over the ring class field of $\mathcal{O}$ with $\operatorname{End}_\mathbb{C}(E) = \mathcal{O}$. Let $p \equiv 1 \bmod m$ be a prime that splits completely in the ring class field of $\mathcal{O}$. According to Lemma \ref{magic}, there exists $\kappa \in \mathcal{O}$ such that 
$$
p = \kappa \overline{\kappa}
$$
and 
$$
a_\mathfrak{P}(E) = \kappa + \overline{\kappa},
$$
where $\mathfrak{P}$ is a prime of $L$ above $p$.

Let $\mathfrak{q} = (\kappa)$ and let $\mathfrak{p} $ be a prime in $M$ above $\mathfrak{q}$.
\[
\begin{tikzcd}
\underbrace{ \prod_{\tau \in \Gal(M/K)} \tau (\mathfrak{p}) }_{\mathfrak{q} \mathcal{O}_{M} } 
\underbrace{ \prod_{\tau \in \Gal(M/K)} \tau (\sigma(\mathfrak{p}))}_{\overline{\mathfrak{q}} \mathcal{O}_M } & &  & M & \\
\underbrace{\mathfrak{q} \overline{\mathfrak{q}} }_{p \mathcal{O}} \arrow[u,  dashed, dash] &   & 
\Q(\zeta_m) \arrow[ur, dash, "\langle \sigma \rangle"] & K \arrow[u, dash, swap, "(\Z/m\Z)^\times"] & \\
p \arrow[u, dashed, dash] & & & \Q \arrow[ul, dash] \arrow[u, dash] & 
\end{tikzcd}
\] 
Define the spin symbol 
$$
[\mathfrak{p}]
= \left( \frac{ \overline{\kappa} }{\mathfrak{p}} \right)_{M,m} = \left( \frac{ \sigma(\kappa) }{\mathfrak{p}} \right)_{M,m}.
$$
If $u \in \mathcal{O}^\times,$ then 
$$
\left(\frac{ \overline{u \kappa}}{\mathfrak{p}}\right)_{M,m} =  \left(\frac{ \overline{ \kappa} }{\mathfrak{p}}\right)_{M,m},
$$
so the spin symbol is well-defined, i.e. is independent on the choice of generator $\kappa$ for $\mathfrak{q}$. Furthermore, $[\mathfrak{p}] = 1$ if and only if $a_\mathfrak{P}(E)$ is an $m^{\operatorname{th}}$ power residue modulo $p$.
\end{proposition}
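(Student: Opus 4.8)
The plan is to deduce both assertions from the elementary Euler-criterion description of the $m$-th power residue symbol of $M$ at $\mathfrak{p}$. First I would pin down the splitting of $p$: since $p\equiv 1\bmod m$ we have $p\nmid m$ and $p$ splits completely in $\Q(\zeta_m)$, and since $p$ splits completely in the ring class field of $\mathcal{O}$ it splits completely in $K$ and is prime to the conductor of $\mathcal{O}$ (otherwise $p$ would not split completely in the ring class field). Hence $\mathcal{O}$ and $\mathcal{O}_K$ coincide at $p$, the ideals $(\kappa)$ and $(\overline{\kappa})$ are coprime with product $(p)$, $p$ splits completely in $M$, and the chosen prime $\mathfrak{p}$ of $M$ above $p$ satisfies $\mathcal{O}_M/\mathfrak{p}\cong\F_p$ and $v_{\mathfrak{p}}(p)=1$. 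I will then use the standard facts that $\left(\frac{\alpha}{\mathfrak{p}}\right)_{M,m}$ is the unique $\zeta\in\mu_m\subset M$ with $\alpha^{(N_{M/\Q}(\mathfrak{p})-1)/m}\equiv\zeta\bmod\mathfrak{p}$, that it is completely multiplicative in $\alpha$ and depends only on $\alpha\bmod\mathfrak{p}$, and that for $a\in\Z$ coprime to $p$ it is identified via $\mathcal{O}_M/\mathfrak{p}\cong\F_p$ with the classical residue symbol $\left(\frac{a}{p}\right)_m$, so in particular $\left(\frac{a}{\mathfrak{p}}\right)_{M,m}=1$ precisely when $a$ is an $m$-th power modulo $p$.

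For well-definedness, note that a unit $u\in\mathcal{O}^\times$ is a root of unity, hence so is $\overline{u}$, of some order $d\mid w_K$, and $\overline{u}\in\mu_d\subseteq\mathcal{O}_M^\times$. Because $\gcd(m,w_K)=1$, the $m$-th power map is a bijection of $\mu_d$, so $\overline{u}=v^m$ for some $v\in\mu_d\subseteq\mathcal{O}_M^\times$, and multiplicativity together with the symbol being $\mu_m$-valued gives
\[
\left(\frac{\overline{u\kappa}}{\mathfrak{p}}\right)_{M,m}=\left(\frac{v}{\mathfrak{p}}\right)_{M,m}^{m}\left(\frac{\overline{\kappa}}{\mathfrak{p}}\right)_{M,m}=\left(\frac{\overline{\kappa}}{\mathfrak{p}}\right)_{M,m},
\]
so $[\mathfrak{p}]$ does not depend on the chosen generator $\kappa$ of $\mathfrak{q}$.

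For the power residue criterion, the key observation is that $\mathfrak{p}\mid(\kappa)\mathcal{O}_M$ forces $\kappa\equiv 0\bmod\mathfrak{p}$, and then $v_{\mathfrak{p}}(\overline{\kappa})=0$ because $\kappa\overline{\kappa}=p$ with $v_{\mathfrak{p}}(p)=1$; thus $\overline{\kappa}$ is coprime to $\mathfrak{p}$. Feeding this into Lemma \ref{magic},
\[
a_{\mathfrak{P}}(E)=\kappa+\overline{\kappa}\equiv\overline{\kappa}\not\equiv 0\pmod{\mathfrak{p}}.
\]
Since the power residue symbol depends only on the class modulo $\mathfrak{p}$, this gives $\left(\frac{a_{\mathfrak{P}}(E)}{\mathfrak{p}}\right)_{M,m}=\left(\frac{\overline{\kappa}}{\mathfrak{p}}\right)_{M,m}=[\mathfrak{p}]$, and by the first paragraph the left-hand side equals $1$ exactly when $a_{\mathfrak{P}}(E)$ is an $m$-th power modulo $p$. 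This is the claimed equivalence; the displayed congruence incidentally shows $a_{\mathfrak{P}}(E)\neq 0$, so no separate case needs to be excluded.

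The argument is essentially formal once the splitting picture at $p$ is established; the two points that genuinely require care are verifying that $p$ is prime to the conductor of $\mathcal{O}$ — so that $(\kappa)$ and $(\overline{\kappa})$ are truly coprime and $v_{\mathfrak{p}}(p)=1$ — and the congruence $a_{\mathfrak{P}}(E)\equiv\overline{\kappa}\bmod\mathfrak{p}$, which is the step that turns the additive trace $\kappa+\overline{\kappa}$ into an input the multiplicative symbol can evaluate. The hypothesis $\gcd(m,w_K)=1$ enters exactly once, to annihilate the unit ambiguity.
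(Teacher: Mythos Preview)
Your proof is correct and follows essentially the same route as the paper: both use that $\gcd(m,w_K)=1$ makes every unit an $m$-th power to show well-definedness, and both use $\kappa\equiv 0\bmod\mathfrak{p}$ to identify $\left(\frac{\overline{\kappa}}{\mathfrak{p}}\right)_{M,m}$ with $\left(\frac{a_{\mathfrak{P}}(E)}{\mathfrak{p}}\right)_{M,m}$, then transfer the $m$-th power condition to $\F_p$ via $\mathcal{O}_M/\mathfrak{p}\cong\F_p$. Your version is slightly more careful in spelling out why $p$ is prime to the conductor and why $\overline{\kappa}$ is a unit at $\mathfrak{p}$, but the argument is the same.
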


\begin{proof}
Since $p\equiv 1 \bmod m$, $p$ splits completely in $\Q(\zeta_m)$. Since we assume $p$ splits completely in $K,$ we conclude that $p$ splits completely in $M$. Since $\mathfrak{p}$ lies above $\mathfrak{q},$ $\sigma(\mathfrak{p})$ lies above $\sigma(\mathfrak{q}) = \overline{\mathfrak{q}}$. In $M$, the primes $\mathfrak{q}$ and $\overline{\mathfrak{q}}$ decompose as follows:
$$
\mathfrak{q} \mathcal{O}_{M} =  \prod_{\tau \in \Gal(M/K)} \tau (\mathfrak{p}),
\quad 
\overline{\mathfrak{q}} \mathcal{O}_{M} = \sigma(\mathfrak{q}) \mathcal{O}_{M}
=   \prod_{\tau \in \Gal(M/K)} \tau (\sigma(\mathfrak{p})).
$$
Since $\gcd(m, w_K) = 1$ by assumption, we see that all elements of $\mathcal{O}^\times$ are $m^{\operatorname{th}}$ powers in $K$. We conclude that the spin symbol $[\mathfrak{p}]$ is independent of the choice of $\kappa$. Using that $\kappa \equiv 0 \bmod \mathfrak{p}$, we have by Lemma \ref{magic}
$$
[\mathfrak{p}] = 
\left( 
\frac{\overline{\kappa}}{\mathfrak{p}}
\right)_{M,m} 
=
\left(\frac{\overline{\kappa} + \kappa}{\mathfrak{p}}\right)_{M, m}
=
\left(\frac{a_\mathfrak{P}(E)}{\mathfrak{p}}\right)_{M, m}.
$$
Hence $[\mathfrak{p}] = 1$ if and only if $a_\mathfrak{P}(E)$ is an $m^{\operatorname{th}}$ power residue modulo $\mathfrak{p}$. Now, $\mathcal{O}_M/\mathfrak{p} \simeq \F_p,$ so $a_\mathfrak{P}(E)$ is an $m^{\operatorname{th}}$ power modulo $\mathfrak{p}$ if and only if $a_\mathfrak{P}(E)$ is an $m^{\operatorname{th}}$ power modulo $p$.
\end{proof}

\subsection{Spin and inertial degrees of ramified primes}
Let $m$ be a prime. For any set $S$ of tame places in a number field $F$, define
$$
V_S = \{ x\in F^\times : (x) = J^m\text{ for some fractional ideal } J, \text{ and } x\in F_v^{\times m} \text{ } \forall v\in S\}. 
$$
Note that, by definition, $F^{\times m} \subseteq V_S$. Let $\mathcal{O}_F^\times$ be the units in $F$ and $\operatorname{Cl}_F[m]$ the $m$-torsion in the class group of $F$. There is an exact sequence
$$
1 \longrightarrow \mathcal{O}_F^\times \otimes \F_m \longrightarrow V_{\emptyset}/F^{\times m} \longrightarrow \operatorname{Cl}_F[m] \longrightarrow 1.
$$
The $m$-governing field of $F$ is the field $F(\zeta_m, \sqrt[m]{V_\emptyset})$, where $\zeta_m$ is a primitive $m^{\operatorname{th}}$ root of unity. 

Recall that $K^{(m)}(p)$ denotes the maximal $m$-elementary abelian extension of $K$ unramified away from the primes above $p$. Write $H_K^{(m)}$ for the maximal unramified $m$-elementary abelian extension of $K$.

\begin{lemma} 
\label{tamegrasmunnier}
Let $K$ be an imaginary quadratic field and let $m$ be a prime. Let $p$ be a prime that splits completely in the $m$-governing field of $K$, and factor $pO_K = \mathfrak{q} \overline{\mathfrak{q}}$. Then there exists a $\Z/m\Z$-extension $K^{(m)}(\mathfrak{q})/K$ ramified exactly at $\mathfrak{q}$ and a $\Z/m\Z$-extension $K^{(m)}(\overline{\mathfrak{q}})/K$ ramified exactly at $\overline{\mathfrak{q}}$. For any choice of $K^{(m)}(\mathfrak{q})/K$ and $K^{(m)}(\overline{\mathfrak{q}})/K$ as above, we have
$$
K^{(m)}(p) = K^{(m)}(\mathfrak{q}) K^{(m)}(\overline{\mathfrak{q}}) H_K^{(m)},
$$ 
and thus $[K^{(m)}(p) : \Q] = 2m^2 \cdot |\mathrm{Cl}(K)[m]|$. Diagramatically
\[ 
\begin{tikzcd}
& K^{(m)}(p) & \\
K^{(m)}(\mathfrak{q}) \arrow[ur, dash] & \arrow[d, dash] H_K^{(m)} \arrow[u, dash] & K^{(m)}(\overline{\mathfrak{q}}) \arrow[ul, dash]\\
   & K \arrow[ul, dash, "\Z/m\Z"] \arrow[ur, dash, swap, "\Z/m\Z"] & \\
   & \Q. \arrow[u, dash] & \\ 
\end{tikzcd}
\]
\end{lemma}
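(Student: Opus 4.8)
The plan is to analyze the maximal $m$-elementary abelian extension of $K$ ramified only at places above $p$ via class field theory, using the hypothesis that $p$ splits completely in the $m$-governing field $K(\zeta_m, \sqrt[m]{V_\emptyset})$ to pin down the ray class group exactly. First I would set up the ray class group: writing $\mathfrak{m} = \mathfrak{q}\overline{\mathfrak{q}}$ (a squarefree modulus coprime to $m$ since $p \nmid m$ for $p$ splitting in $\Q(\zeta_m)$), the extension $K^{(m)}(p)$ corresponds by class field theory to the maximal exponent-$m$ quotient of the ray class group $\mathrm{Cl}_{\mathfrak{m}}(K)$. From the exact sequence
\[
1 \longrightarrow \frac{(O_K/\mathfrak{q})^\times \times (O_K/\overline{\mathfrak{q}})^\times}{\Imag(\mathcal{O}_K^\times)} \longrightarrow \mathrm{Cl}_{\mathfrak{m}}(K) \longrightarrow \mathrm{Cl}(K) \longrightarrow 1,
\]
I would tensor with $\F_m$ and take the resulting long exact sequence. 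Because $p$ splits completely in $\Q(\zeta_m)$ we have $m \mid \Norm(\mathfrak{q}) - 1 = p-1$, so each $(O_K/\mathfrak{q})^\times \otimes \F_m \cong \Z/m\Z$; the unit contribution is killed since $\gcd(m, w_K) = 1$ forces $\mathcal{O}_K^\times \otimes \F_m = 0$. This gives $|\mathrm{Cl}_{\mathfrak{m}}(K) \otimes \F_m| = m^2 \cdot |\mathrm{Cl}(K)[m]|$, provided the connecting map $\mathrm{Cl}(K)[m] \to ((O_K/\mathfrak{q})^\times \times (O_K/\overline{\mathfrak{q}})^\times) \otimes \F_m$ vanishes — and this is exactly where the $m$-governing field hypothesis enters: the connecting map sends an ideal class $[\mathfrak{a}]$ with $\mathfrak{a}^m = (x)$, $x \in V_\emptyset$, to the image of $x$ in the residue fields mod $m$-th powers, which is trivial precisely because $p$ (equivalently $\mathfrak{q}, \overline{\mathfrak{q}}$) splits completely in $K(\zeta_m, \sqrt[m]{V_\emptyset})$.

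Next I would translate the group-theoretic splitting into the field compositum. The quotient $\mathrm{Cl}_{\mathfrak{m}}(K) \otimes \F_m$ decomposes as a direct sum of the $\mathfrak{q}$-part, the $\overline{\mathfrak{q}}$-part (each $\cong \Z/m\Z$), and $\mathrm{Cl}(K)[m]$; under the class field theory dictionary these correspond respectively to $K^{(m)}(\mathfrak{q})$ (the unique $\Z/m\Z$-extension ramified only at $\mathfrak{q}$, whose existence follows from the $\mathfrak{q}$-part being cyclic of order $m$ and the connecting map vanishing), to $K^{(m)}(\overline{\mathfrak{q}})$, and to $H_K^{(m)}$. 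Since these three subfields are linearly disjoint over $K$ (their ramification loci, resp. the unramified condition, force the Galois groups to intersect trivially) and their compositum has the right degree $m^2 |\mathrm{Cl}(K)[m]|$ over $K$, we get $K^{(m)}(p) = K^{(m)}(\mathfrak{q}) K^{(m)}(\overline{\mathfrak{q}}) H_K^{(m)}$. Finally $[K^{(m)}(p):\Q] = 2 \cdot [K^{(m)}(p):K] = 2 m^2 |\mathrm{Cl}(K)[m]|$.

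The main obstacle is the vanishing of the connecting homomorphism $\mathrm{Cl}(K)[m] \to (\text{residue fields}) \otimes \F_m$; everything else is bookkeeping with the ray class sequence. I would make this precise by recalling that for $[\mathfrak{a}] \in \mathrm{Cl}(K)[m]$ one picks a generator $x$ of $\mathfrak{a}^m$, which lies in $V_\emptyset$ by definition, and the image under the connecting map is the class of $x \bmod \mathfrak{q}$ and $x \bmod \overline{\mathfrak{q}}$ in $(O_K/\mathfrak{q})^\times/m \times (O_K/\overline{\mathfrak{q}})^\times/m$; this class is the obstruction to $x$ being an $m$-th power locally at $\mathfrak{q}$ and $\overline{\mathfrak{q}}$, which vanishes exactly when $\mathfrak{q}$ and $\overline{\mathfrak{q}}$ split completely in $K(\zeta_m, \sqrt[m]{x})$ — guaranteed by the splitting hypothesis on $p$ in the $m$-governing field. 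One must also check $x$ can be chosen in $F_v^{\times m}$ at all $v \mid \mathfrak{q}\overline{\mathfrak{q}}$ consistently, but since $\zeta_m \in K_{\mathfrak{q}}$ (as $m \mid p - 1$) Kummer theory makes the local splitting condition equivalent to $x$ being an $m$-th power in the residue field, closing the argument.
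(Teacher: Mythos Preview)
Your argument is correct in outline but takes a genuinely different route from the paper. The paper's proof is two lines: it observes $p \neq m$ (since the $m$-governing field is ramified at $m$), invokes the tame Gras--Munnier theorem as a black box to get the existence of $K^{(m)}(\mathfrak{q})$ and $K^{(m)}(\overline{\mathfrak{q}})$, and then asserts that the compositum with $H_K^{(m)}$ is ``evidently'' maximal (the upper bound $[K^{(m)}(p):K] \leq m^2|\mathrm{Cl}(K)[m]|$ being the easy direction of the ray-class exact sequence). You instead unpack the ray class group $\mathrm{Cl}_{\mathfrak{q}\overline{\mathfrak{q}}}(K)\otimes\F_m$ directly, proving both the lower and upper bounds at once and in particular re-deriving the relevant instance of Gras--Munnier. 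Your approach is more self-contained and makes transparent exactly where the governing-field hypothesis is spent (vanishing of the connecting map), at the cost of being longer; the paper's approach is terser but relies on an external theorem whose proof is essentially the computation you carry out.

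One small correction: you invoke $\gcd(m, w_K) = 1$ to kill $\mathcal{O}_K^\times \otimes \F_m$, but the lemma as stated does not assume this. Fortunately your argument survives: every unit lies in $V_\emptyset$, so the governing-field splitting hypothesis already forces each $u \in \mathcal{O}_K^\times$ to be an $m$-th power in $(O_K/\mathfrak{q})^\times$ and $(O_K/\overline{\mathfrak{q}})^\times$, whence the image of $\mathcal{O}_K^\times$ in $(O_K/\mathfrak{m})^\times \otimes \F_m$ vanishes regardless of $w_K$. Also, the phrase ``decomposes as a direct sum'' is stronger than what you actually prove or need; the exact sequence gives only a filtration, but the degree count together with the ramification-based linear-disjointness argument (which you sketch) is enough to conclude equality of fields.
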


\begin{proof}
Since $p$ splits completely in the $m$-governing field and since the $m$-governing field is ramified at $m$, it follows that $p \neq m$. Therefore, all primes of $K$ above $p$ must be tamely ramified in $K^{(m)}(p)/K$. 

By the tame Gras--Munnier theorem, there exists a $\Z/m\Z$-extension of $K$ ramified exactly at $\mathfrak{q}$. Likewise, there exists a $\Z/m\Z$-extension of $K$ ramified exactly at $\overline{\mathfrak{q}}$. The compositum $K^{(m)}(\mathfrak{q}) K^{(m)}(\overline{\mathfrak{q}}) H_K^{(m)}$ is an $m$-elementary abelian extension of $K$ unramified away from $p$ with degree $2m^2 \cdot |\mathrm{Cl}(K)[m]|$. Evidently, $K^{(m)}(\mathfrak{q}) K^{(m)}(\overline{\mathfrak{q}}) H_K^{(m)}$ is the maximal such extension and hence $K^{(m)}(p) = K^{(m)}(\mathfrak{q}) K^{(m)}(\overline{\mathfrak{q}}) H_K^{(m)}$.
\end{proof}

\begin{lemma}
Let $K$ be an imaginary quadratic field and let $m$ be a prime. Let $p$ be a prime that splits completely in the $m$-governing field of $K$. There are two possibilities for the standard $(e, f, g)$-decomposition of $p$ in $K^{(m)}(p)$:
\begin{table}[h!] 
\begin{center} 
\begin{tabular}{llll}  
$e(p, K^{(m)}(p)/\Q)$ & $f(p, K^{(m)}(p)/\Q)$ & $g(p, K^{(m)}(p)/\Q)$  & $[K^{(m)}(p) : \Q]$ \\ [1ex]\hline 
$m$ & $1$ & $2m \cdot |\mathrm{Cl}(K)[m]|$ & $2m^2 \cdot |\mathrm{Cl}(K)[m]|$ \\ 
$m$ & $m$ & $2 \cdot |\mathrm{Cl}(K)[m]|$ & $2m^2 \cdot |\mathrm{Cl}(K)[m]|$ \\   [1ex]
\end{tabular}
\caption{The $(e,f,g)$-decomposition of $p$ in $K^{(m)}(p)/\Q$}
\end{center}
\end{table}
\end{lemma}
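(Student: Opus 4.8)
The plan is to pin down $e=e(p,K^{(m)}(p)/\Q)$, then determine the two options for $f=f(p,K^{(m)}(p)/\Q)$, and finally read off $g$ from the degree formula of Lemma~\ref{tamegrasmunnier}. Two preliminary observations make this legitimate. First, $K^{(m)}(p)$ is canonically attached to the pair $(K,p)$ — it is the maximal $m$-elementary abelian extension of $K$ unramified outside the primes above $p$ — so it is stable under $\Gal(\overline{\Q}/\Q)$, hence $K^{(m)}(p)/\Q$ is Galois; in particular there is a well-defined $(e,f,g)$ with $efg=[K^{(m)}(p):\Q]=2m^2\,|\mathrm{Cl}(K)[m]|$ by Lemma~\ref{tamegrasmunnier}. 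Second, exactly as in the proof of that lemma, the hypothesis that $p$ splits completely in the $m$-governing field forces $p$ to split in $K$ (so $e(p,K/\Q)=f(p,K/\Q)=1$) and forces $p\neq m$, so every prime of $K$ above $p$ is tamely ramified in $K^{(m)}(p)/K$; consequently the inertia group of a prime above it inside the $m$-elementary abelian group $\Gal(K^{(m)}(p)/K)$ is cyclic of order dividing $m$, and likewise its Frobenius quotient (decomposition modulo inertia) is cyclic of order dividing $m$.

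For the ramification index, write $pO_K=\mathfrak{q}\overline{\mathfrak{q}}$. Since $K^{(m)}(\mathfrak{q})\subseteq K^{(m)}(p)$ and $\mathfrak{q}$ is totally (tamely) ramified of degree $m$ in $K^{(m)}(\mathfrak{q})/K$, we get $e(\mathfrak{q},K^{(m)}(p)/K)\geq m$; combined with the divisibility bound from the previous paragraph, $e(\mathfrak{q},K^{(m)}(p)/K)=m$, and the same holds at $\overline{\mathfrak{q}}$. By multiplicativity of ramification indices in the tower $\Q\subseteq K\subseteq K^{(m)}(p)$ we conclude $e(p,K^{(m)}(p)/\Q)=1\cdot m=m$, which is the first column of both rows.

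For the residue degree, fix a prime of $K^{(m)}(p)$ above $\mathfrak{q}$ with decomposition group $D$ and inertia group $I$; then $|I|=m$, and $D/I$ is cyclic (generated by Frobenius) of order $f(\mathfrak{q},K^{(m)}(p)/K)$, hence of order $1$ or $m$ since $\Gal(K^{(m)}(p)/K)$ has exponent $m$ and $m$ is prime. Therefore $f(p,K^{(m)}(p)/\Q)=f(p,K/\Q)\cdot f(\mathfrak{q},K^{(m)}(p)/K)\in\{1,m\}$. Substituting $e=m$ and $f\in\{1,m\}$ into $efg=2m^2\,|\mathrm{Cl}(K)[m]|$ gives $g=2m\,|\mathrm{Cl}(K)[m]|$ when $f=1$ and $g=2\,|\mathrm{Cl}(K)[m]|$ when $f=m$, which are precisely the two rows of the table.

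I do not anticipate a genuine obstacle: the argument is purely formal once Lemma~\ref{tamegrasmunnier} is available. The one point carrying the weight of the statement — and the reason the list of possibilities has exactly two rows rather than more — is that a tamely ramified prime in the $m$-elementary abelian extension $K^{(m)}(p)/K$ has cyclic inertia and cyclic Frobenius, so that both $e$ and $f$ are constrained to divide the prime $m$; this is standard tame ramification theory, but it is where one must be careful to invoke $p\neq m$.
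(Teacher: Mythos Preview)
Your proof is correct and follows essentially the same route as the paper: pin down $e$ via tame cyclic inertia inside the $m$-elementary abelian group $\Gal(K^{(m)}(p)/K)$, constrain $f$ to $\{1,m\}$ by the same exponent-$m$ reasoning, and read off $g$ from the degree computed in Lemma~\ref{tamegrasmunnier}. If anything your version is more careful than the paper's, since you explicitly justify $e\geq m$ via the inclusion $K^{(m)}(\mathfrak{q})\subseteq K^{(m)}(p)$ and spell out why $K^{(m)}(p)/\Q$ is Galois so that a single $(e,f,g)$ makes sense.
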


\begin{proof}
Because $p$ is unramified in $K$ and tamely ramified in $K^{(m)}(p)/K$, and because tame ramification is cyclic, we have $e(p, K^{(m)}(p)/\Q) = m$. Since $f(p, K/\Q) = 1,$ and since $\Gal(K^{(m)}(p)/K)$ is a vector space over $\F_m$, it follows that $f(p, K^{(m)}(p)/\Q)$ is either 1 or $m$. By Lemma \ref{tamegrasmunnier}, we have
$$
e(p, K^{(m)}(p)/\Q) f(p, K^{(m)}(p)/\Q) g(p, K^{(m)}(p)/\Q) = 2m^2 \cdot |\mathrm{Cl}(K)[m]|,
$$
and the result follows.
\end{proof}

\begin{lemma} 
\label{lSplit1}
Let $m$ be a prime. Suppose $p$ splits completely in the $m$-governing field of a quadratic imaginary field $K$ and in $H_K^{(m)}$. Then we have for any choice of $K^{(m)}(\overline{\mathfrak{q}})$
$$
f(p, K^{(m)}(p)/\Q) = 1 \iff f(\mathfrak{q}, K^{(m)}(\overline{\mathfrak{q}})/K) = 1.
$$
\end{lemma}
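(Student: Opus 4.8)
\emph{Proof proposal.} The plan is to reduce everything to a statement inside the \emph{abelian} extension $K^{(m)}(p)/K$, where inertia and decomposition groups are canonically defined. Since $p$ splits completely in $K$, we have $f(p,K^{(m)}(p)/\Q) = f(\mathfrak{q},K^{(m)}(p)/K)$, so it suffices to show that $f(\mathfrak{q},K^{(m)}(p)/K) = 1$ if and only if $f(\mathfrak{q},K^{(m)}(\overline{\mathfrak{q}})/K) = 1$ for any admissible choice of $K^{(m)}(\overline{\mathfrak{q}})$.

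First I would identify the inertia field $T$ of $\mathfrak{q}$ in $K^{(m)}(p)/K$. Because $\Gal(K^{(m)}(p)/K)$ is elementary abelian, the inertia subgroup $I_{\mathfrak{q}}$ and decomposition subgroup $D_{\mathfrak{q}}$ are well-defined independently of the chosen prime above $\mathfrak{q}$, and $I_{\mathfrak{q}}$ is normal, so $T := (K^{(m)}(p))^{I_{\mathfrak{q}}}$ is Galois over $K$, unramified at $\mathfrak{q}$, with $f(\mathfrak{q},T/K) = [D_{\mathfrak{q}}:I_{\mathfrak{q}}] = f(\mathfrak{q},K^{(m)}(p)/K)$. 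Hence $f(\mathfrak{q},K^{(m)}(p)/K)=1$ if and only if $\mathfrak{q}$ splits completely in $T$. I claim $T = K^{(m)}(\overline{\mathfrak{q}})\,H_K^{(m)}$: by Lemma \ref{tamegrasmunnier}, $K^{(m)}(p) = K^{(m)}(\mathfrak{q})\,K^{(m)}(\overline{\mathfrak{q}})\,H_K^{(m)}$, and both $K^{(m)}(\overline{\mathfrak{q}})$ and $H_K^{(m)}$ are unramified at $\mathfrak{q}$ (the first ramifies only at $\overline{\mathfrak{q}}\ne\mathfrak{q}$, the second is everywhere unramified), so $K^{(m)}(\overline{\mathfrak{q}})H_K^{(m)}\subseteq T$. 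For the reverse inclusion I would compare degrees: using $|I_{\mathfrak{q}}| = e(\mathfrak{q},K^{(m)}(p)/K) = m$ (tame ramification is cyclic, as recorded in the $(e,f,g)$-table above) gives $[T:K] = [K^{(m)}(p):K]/m = m\cdot|\mathrm{Cl}(K)[m]|$, which equals $[K^{(m)}(\overline{\mathfrak{q}})H_K^{(m)}:K]$ because $K^{(m)}(\overline{\mathfrak{q}})\cap H_K^{(m)}=K$ — again since a ramified extension of prime degree meets an extension unramified at that prime trivially.

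Finally, since $K^{(m)}(\overline{\mathfrak{q}})H_K^{(m)}/K$ is abelian, $\mathfrak{q}$ splits completely in it if and only if it splits completely in both $K^{(m)}(\overline{\mathfrak{q}})$ and $H_K^{(m)}$; the hypothesis that $p$ (equivalently $\mathfrak{q}$) splits completely in $H_K^{(m)}$ kills the second condition, so $f(\mathfrak{q},K^{(m)}(p)/K) = 1$ is equivalent to $\mathfrak{q}$ splitting completely in $K^{(m)}(\overline{\mathfrak{q}})$, i.e. to $f(\mathfrak{q},K^{(m)}(\overline{\mathfrak{q}})/K)=1$ (that extension being unramified at $\mathfrak{q}$). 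This yields the claimed equivalence, and in passing shows that the right-hand condition does not depend on the choice of $K^{(m)}(\overline{\mathfrak{q}})$.

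I expect the only delicate point to be the identification $T = K^{(m)}(\overline{\mathfrak{q}})H_K^{(m)}$ — specifically, ruling out that $T$ is strictly larger than $K^{(m)}(\overline{\mathfrak{q}})H_K^{(m)}$. This is settled by the degree count, which rests on $e(\mathfrak{q},K^{(m)}(p)/K)=m$ and on the linear-disjointness facts obtained by comparing ramification loci. Everything else is routine multiplicativity of $(e,f,g)$ in towers together with the behaviour of complete splitting in composita of abelian extensions.
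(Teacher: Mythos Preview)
Your proof is correct and follows essentially the same route as the paper. Both reduce to $f(\mathfrak{q},K^{(m)}(p)/K)$ via $f(p,K/\Q)=1$, use the decomposition $K^{(m)}(p)=K^{(m)}(\mathfrak{q})\,K^{(m)}(\overline{\mathfrak{q}})\,H_K^{(m)}$ from Lemma~\ref{tamegrasmunnier}, invoke the hypothesis on $H_K^{(m)}$, and dispose of the $K^{(m)}(\mathfrak{q})$-factor using that it is totally ramified at $\mathfrak{q}$; the only cosmetic difference is that the paper phrases the last step as ``$e(\mathfrak{q},K^{(m)}(\mathfrak{q})/K)=m$ forces $f(\mathfrak{q},K^{(m)}(\mathfrak{q})/K)=1$'' and checks $f=1$ in each factor of the compositum, whereas you package the same observation as a degree count identifying the inertia field $T=K^{(m)}(\overline{\mathfrak{q}})H_K^{(m)}$.
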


\begin{proof}
Note that the compositum $K^{(m)}(\overline{\mathfrak{q}}) H_K^{(m)}$ does not depend on the choice of $K^{(m)}(\overline{\mathfrak{q}})/K$ as it is the maximal $m$-elementary abelian extension of $K$ unramified away from $\mathfrak{q}$. Since $p$ is assumed to split completely in $H_K^{(m)}$, we conclude that $f(\mathfrak{q}, K^{(m)}(\overline{\mathfrak{q}})/K)$ is independent of this choice.

Since $f(p, K/\Q) = 1$, we have $f(p, K^{(m)}(p)/\Q) = f(\mathfrak{q}, K^{(m)}(p)/K)$. Because $K^{(m)}(p) = K^{(m)}(\mathfrak{q}) K^{(m)}(\overline{\mathfrak{q}}) H_K^{(m)}$ and because $p$ splits completely in $H_K^{(m)}$, we conclude that
\begin{align}
\label{eEquivalence}
f(p, K^{(m)}(p)/\Q) = 1 &\iff f(\mathfrak{q}, K^{(m)}(p)/K) = 1 \nonumber \\
&\iff f(\mathfrak{q}, K^{(m)}(\mathfrak{q})/K) = 1 \text{ and }f(\mathfrak{q}, K^{(m)}(\overline{\mathfrak{q}})/K) = 1.
\end{align}
Since $e(\mathfrak{q}, K^{(m)}(\mathfrak{q})/K) = m$, the condition $f(\mathfrak{q}, K^{(m)}(\mathfrak{q})/K) = 1$ is automatically satisfied. Removing it from the equivalence \eqref{eEquivalence} gives the lemma.
\end{proof}

\begin{lemma} 
\label{lSplit2}
Let $K$ be an imaginary quadratic field and let $m \geq 3$ be any odd integer such that $\gcd(m, w_K) = 1$. Set $M := K(\zeta_m)$. Let $\mathcal{O}$ be an order in $K$. Let $p$ be a prime that splits completely in the ring class field of $\mathcal{O}$ and the $m$-governing field of $K$, and let $\kappa \in \mathcal{O}$ with $p = \kappa \overline{\kappa}$. Let $\mathfrak{q} = (\kappa)$, $\mathfrak{p}$ a prime in $M$ above $\mathfrak{q}$ and $[\mathfrak{p}] = (\overline{\kappa}/ \mathfrak{p})_{M, m}$. Then we have for any choice of $K^{(m)}(\overline{\mathfrak{q}})$
$$
f(\mathfrak{q}, K^{(m)}(\overline{\mathfrak{q}})/K) = 1 \iff [\mathfrak{p}] = 1. 
$$
\end{lemma}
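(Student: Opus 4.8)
The plan is to show that both sides of the claimed equivalence are separately equivalent to the single arithmetic condition that $\kappa$ reduces to an $m^{\textup{th}}$ power in $(\mathcal{O}_K/\overline{\mathfrak{q}})^\times$, and then to combine these. The equivalence with $[\mathfrak{p}] = 1$ is a direct unwinding of the definition of the power residue symbol, while the equivalence with $f(\mathfrak{q}, K^{(m)}(\overline{\mathfrak{q}})/K) = 1$ comes from class field theory applied to the ray class field of $K$ of modulus $\overline{\mathfrak{q}}$.

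First I would treat the symbol side. By Lemma~\ref{magic} (as used in Proposition~\ref{spin and a_p}), $\mathfrak{q} = (\kappa)$ and $\overline{\mathfrak{q}} = (\sigma(\kappa))$ are principal prime ideals of $\mathcal{O}_K$; here one uses that $p$ splits completely in the ring class field, so $p$ is coprime to the conductor of $\mathcal{O}$ and $\kappa\mathcal{O}_K$ is a prime ideal. Since $\mathfrak{p} \mid \mathfrak{q}$ lies above the totally split prime $p$, the inclusion $\mathcal{O}_K \hookrightarrow \mathcal{O}_M$ induces an isomorphism $\mathcal{O}_K/\mathfrak{q} \cong \mathcal{O}_M/\mathfrak{p}$ (both fields are $\F_p$). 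By the definition of the power residue symbol, $[\mathfrak{p}] = (\sigma(\kappa)/\mathfrak{p})_{M,m} = 1$ if and only if $\sigma(\kappa)$ is an $m^{\textup{th}}$ power in $(\mathcal{O}_M/\mathfrak{p})^\times$, equivalently in $(\mathcal{O}_K/\mathfrak{q})^\times$; and applying the ring automorphism $\sigma$ of $\mathcal{O}_K$, which sends $\mathfrak{q}$ to $\overline{\mathfrak{q}}$, sends $\sigma(\kappa)$ to $\kappa$, and preserves $m^{\textup{th}}$ powers, this is equivalent to $\kappa$ being an $m^{\textup{th}}$ power in $(\mathcal{O}_K/\overline{\mathfrak{q}})^\times$.

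For the splitting side, note first that $p \nmid m$ (as $p$ splits completely in the $m$-governing field, which is ramified at $m$), so $K^{(m)}(\overline{\mathfrak{q}})/K$ is tamely ramified, and being ramified only at $\overline{\mathfrak{q}}$ it has conductor dividing $\overline{\mathfrak{q}}$; hence $\Gal(K^{(m)}(\overline{\mathfrak{q}})/K)$ is a quotient of the ray class group $\mathrm{Cl}_{\overline{\mathfrak{q}}}(K)$. I would then pass to $m$-torsion in the exact sequence
$$
\mathcal{O}_K^\times \longrightarrow (\mathcal{O}_K/\overline{\mathfrak{q}})^\times \longrightarrow \mathrm{Cl}_{\overline{\mathfrak{q}}}(K) \longrightarrow \mathrm{Cl}(K) \longrightarrow 1.
$$
Because $\gcd(m, w_K) = 1$, every unit of $\mathcal{O}_K$ is an $m^{\textup{th}}$ power, so $(\mathcal{O}_K/\overline{\mathfrak{q}})^\times \otimes \F_m \cong \Z/m\Z$ (using $p \equiv 1 \bmod m$) surjects onto the inertia subgroup at $\overline{\mathfrak{q}}$ inside $\mathrm{Cl}_{\overline{\mathfrak{q}}}(K) \otimes \F_m$; and since $K^{(m)}(\overline{\mathfrak{q}})/K$ is ramified \emph{exactly} at $\overline{\mathfrak{q}}$, the composite $(\mathcal{O}_K/\overline{\mathfrak{q}})^\times \otimes \F_m \to \Gal(K^{(m)}(\overline{\mathfrak{q}})/K)$ is an isomorphism. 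Under the Artin map, $\Frob_{\mathfrak{q}}$ corresponds to the class of $\mathfrak{q}$ in $\mathrm{Cl}_{\overline{\mathfrak{q}}}(K)$; since $\mathfrak{q} = (\kappa)$ is principal this class lies in the image of $(\mathcal{O}_K/\overline{\mathfrak{q}})^\times$ and equals the image of $\kappa \bmod \overline{\mathfrak{q}}$ (the ambiguity in the choice of generator being killed once more by $\gcd(m, w_K) = 1$). Therefore $f(\mathfrak{q}, K^{(m)}(\overline{\mathfrak{q}})/K) = 1$ if and only if $\kappa$ is an $m^{\textup{th}}$ power in $(\mathcal{O}_K/\overline{\mathfrak{q}})^\times$ --- a condition that makes no reference to the choice of $K^{(m)}(\overline{\mathfrak{q}})$, which disposes of the ``for any choice'' clause. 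Combined with the previous paragraph, this proves the lemma.

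The step I expect to be the main obstacle is the class field theory bookkeeping of the third paragraph: one must fix and track the normalization of the Artin isomorphism carefully, verify that ``ramified exactly at $\overline{\mathfrak{q}}$'' really forces $\Gal(K^{(m)}(\overline{\mathfrak{q}})/K)$ to be the full inertia quotient $(\mathcal{O}_K/\overline{\mathfrak{q}})^\times \otimes \F_m$ rather than a proper quotient (the existence of such an extension, furnished by the tame Gras--Munnier theorem in Lemma~\ref{tamegrasmunnier} together with the governing-field hypothesis, being used as input here), and keep the unit ambiguities under control when identifying $\Frob_{\mathfrak{q}}$ with $\kappa \bmod \overline{\mathfrak{q}}$. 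None of this is deep, but it is the place where an unnoticed twist or inverse would silently break the argument.
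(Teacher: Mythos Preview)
Your proposal is correct and follows essentially the same route as the paper: both arguments reduce each side of the equivalence to the single condition that $\kappa$ is an $m^{\textup{th}}$ power in $(\mathcal{O}_K/\overline{\mathfrak{q}})^\times$, using class field theory for the splitting side and a direct unwinding of the residue symbol (together with the Galois action of $\sigma$) for the symbol side. The only organizational difference is that the paper routes the class-field-theory step through the canonical compositum $K^{(m)}(\overline{\mathfrak{q}}) H_K^{(m)}$ and then invokes the hypothesis that $p$ splits in $H_K^{(m)}$ (via the ring class field), whereas you argue directly that the inertia map $(\mathcal{O}_K/\overline{\mathfrak{q}})^\times \otimes \F_m \to \Gal(K^{(m)}(\overline{\mathfrak{q}})/K)$ is an isomorphism for any choice of $K^{(m)}(\overline{\mathfrak{q}})$; both are valid and amount to the same computation.
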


\begin{proof}
Let $G(\overline{\mathfrak{q}})$ be the full ray class group of modulus $\overline{\mathfrak{q}}$ above $K$ and let $K(\overline{\mathfrak{q}})$ be the full ray class field of modulus $\overline{\mathfrak{q}}$ above $K$. If $\psi_{K(\overline{\mathfrak{q}})/K}$ is the Artin reciprocity map, there is a commutative diagram
\[
\begin{tikzcd}
G(\overline{\mathfrak{q}}) \arrow[r, "\psi_{K(\overline{\mathfrak{q}})/K}"] \arrow[d]
& \Gal(K(\overline{\mathfrak{q}})/K) \arrow[d] \\
G(\overline{\mathfrak{q}}) / G(\overline{\mathfrak{q}})^{m} \arrow[r] & \Gal(K^{(m)}(\overline{\mathfrak{q}}) H_K^{(m)}/K),
\end{tikzcd}
\]
where the horizontal arrows are isomorphisms. Let $\operatorname{Cl}_K$ be the class group of $K$. By Theorem 1.7, Chapter V, in \cite{milneCFT}, there is a commutative diagram
\[
\begin{tikzcd}
    1 \arrow[r] & (\mathcal{O}/\overline{\mathfrak{q}})^\times /\mathcal{O}^\times \arrow[r, "\varphi"] \arrow[d] & G(\overline{\mathfrak{q}}) \arrow[r] \arrow[d] & \operatorname{Cl}_K  \arrow[r] \arrow[d] & 1 \\
    1 \arrow[r] & (\mathcal{O}/\overline{\mathfrak{q}})^\times /(\mathcal{O}/\overline{\mathfrak{q}})^{\times m} \arrow[r] & G(\overline{\mathfrak{q}}) / G(\overline{\mathfrak{q}})^m \arrow[r] & \operatorname{Cl}_K/\operatorname{Cl}_K^m \arrow[r] & 1
\end{tikzcd}
\]
with exact horizontal rows. Here we used that $\mathcal{O}^\times \subseteq (\mathcal{O}/\overline{\mathfrak{q}})^{\times m}$, which follows from our assumption $\gcd(m, w_K) = 1$ as then every element of $\mathcal{O}^\times$ must be an $m^{\operatorname{th}}$ power.

By definition, $\varphi(\kappa) = \mathfrak{q}$. Observe that $p$ certainly splits in $H_K^{(m)}$, as $H_K^{(m)}$ is contained in the ring class field of $\mathcal{O}$. Therefore we conclude that 
\begin{align*}
f(\kappa, K^{(m)}(\overline{\mathfrak{q}})/K) = 1 &\Longleftrightarrow f(\kappa, K^{(m)}(\overline{\mathfrak{q}}) H_K^{(m)}/K) = 1 \\
&\Longleftrightarrow \kappa \text{ is an } m^{\operatorname{th}} \text{ power modulo } \overline{\mathfrak{q}}.
\end{align*}
Now, $\kappa$ is an $m^{\operatorname{th}}$ power modulo $\overline{\mathfrak{q}}$ if and only if $\kappa$ is an $m^{\operatorname{th}}$ power modulo any prime in $\mathcal{O}_M$ above $\overline{\mathfrak{q}}$ if and only if
$$
\left(
\frac{\kappa}{\sigma(\mathfrak{p})}
\right)_{M, m} 
= 1.
$$
Since $\Q(\zeta_m)$ is the fixed field of $\sigma$, we have the equalities
$$
1 = \left(
\frac{\kappa}{\sigma(\mathfrak{p})}
\right)_{M, m} 
\Longleftrightarrow
1 =
\sigma \left(\frac{\kappa}{\sigma(\mathfrak{p})}\right)_{M, m}
= \left(\frac{\sigma(\kappa)}{\sigma^2(\mathfrak{p})}\right)_{M, m}
= \left(\frac{\sigma(\kappa)}{\mathfrak{p}}\right)_{M, m}
= \left(\frac{\overline{\kappa}}{\mathfrak{p}}\right)_{M, m}
= [\mathfrak{p}],
$$
as desired.
\end{proof}

\section{Analytic prerequisites}
In this section we set up the analytic machinery that we will use in the next two sections. Our approach is based on \cite{KM1}. 

\subsection{Cubic residue symbols}
We shall briefly recall the required theory of cubic residue symbols that we need. Let $K$ be a number field always containing a fixed primitive third root of unity that we denote $\zeta_3$. For $\alpha \in O_K$ and $\mathfrak{p}$ a prime of $K$ not dividing $3$, we define $\left(\frac{\alpha}{\mathfrak{p}}\right)_{K, 3}$ as the unique element in $\{1, \zeta_3, \zeta_3^2, 0\}$ satisfying
\[
\left(\frac{\alpha}{\mathfrak{p}}\right)_{K, 3} \equiv \alpha^{\frac{N_{K/\Q}(\mathfrak{p}) - 1}{3}} \bmod \mathfrak{p}.
\]
We multiplicatively extend this to all ideals $I$ coprime to $3$. We shall need the following weak form of cubic reciprocity.

\begin{proposition}
\label{pReciprocity}
Let $M$ be a number field containing $\zeta_3$. Take elements $\alpha, \beta \in O_M$ and assume that $\beta$ is coprime to $3$. Then the cubic residue symbol $(\alpha/\beta)_{M, 3}$ depends only on the congruence class of $\beta$ modulo $27 \alpha O_M$. Moreover, if $\alpha$ is also coprime to $3$, we have
\[
\left(\frac{\alpha}{\beta}\right)_{M, 3} = \mu \cdot \left(\frac{\beta}{\alpha}\right)_{M, 3},
\]
where $\mu$ depends only on the congruence classes of $\alpha$ and $\beta$ modulo $27O_M$.
\end{proposition}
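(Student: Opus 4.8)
The plan is to reduce everything to the classical cubic reciprocity law over $\Z[\zeta_3]$ and then track how the "correction factor" behaves under a congruence. First I would recall the Hilbert symbol / product formula formulation: for $\alpha, \beta \in O_M$ coprime to each other and to $3$, one has
$$
\left(\frac{\alpha}{\beta}\right)_{M,3} \left(\frac{\beta}{\alpha}\right)_{M,3}^{-1} = \prod_{v \mid 3\infty} (\alpha, \beta)_v,
$$
where $(\cdot,\cdot)_v$ is the local cubic Hilbert symbol at the place $v$, and the product runs over the places of $M$ dividing $3$ (the archimedean places contribute trivially since $3$ is odd and $\mu_3 \not\subseteq \R$, but more importantly one does not even need $\mu_3 \subseteq M_v$ for real $v$). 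The point is that the left-hand side — call it $\mu = \mu(\alpha,\beta)$ — is a product of \emph{local} symbols at the finitely many primes above $3$, and each local cubic Hilbert symbol $(\alpha,\beta)_v$ depends only on $\alpha$ and $\beta$ modulo a bounded power of the maximal ideal at $v$; collecting these bounds over all $v \mid 3$ shows $\mu$ depends only on $\alpha, \beta$ modulo some fixed ideal supported at $3$. One then checks that $27 O_M$ is a safe uniform choice (the precise exponent needed at each $v \mid 3$ is at most the one coming from the standard formula for the tame-vs-wild local symbol; $27$ is comfortably large enough, and being generous here costs nothing). This gives the second assertion.

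For the first assertion — that $(\alpha/\beta)_{M,3}$ depends only on $\beta \bmod 27\alpha O_M$, without assuming $\alpha$ coprime to $3$ — I would argue as follows. Write $\beta' = \beta + 27\alpha\gamma$ for some $\gamma \in O_M$; I want $(\alpha/\beta)_{M,3} = (\alpha/\beta')_{M,3}$. Note $\beta'$ is still coprime to $3$ and still coprime to $\alpha$ (since $\beta' \equiv \beta$ modulo any prime dividing $\alpha$), so both symbols are nonzero. Factor $\alpha = 3^a \alpha_0 \cdot(\text{unit-like part})$ more carefully: write the ideal $(\alpha) = \mathfrak{d}\,\mathfrak{a}_0$ where $\mathfrak{d}$ is the $3$-part and $\mathfrak{a}_0$ is coprime to $3$. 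By multiplicativity of the symbol in the top argument over the prime factorization of the \emph{ideal} $(\alpha)$, it suffices to treat (i) $\alpha$ a prime power above $3$, and (ii) $\alpha$ coprime to $3$. Case (ii) is exactly the situation where reciprocity applies: apply the formula of the previous paragraph to both $(\alpha,\beta)$ and $(\alpha,\beta')$, use that $\beta \equiv \beta' \bmod 27\alpha O_M$ (hence $\bmod\ 27 O_M$ and compatibly with $\alpha$) to get $\mu(\alpha,\beta) = \mu(\alpha,\beta')$, and use that $(\beta/\alpha)_{M,3} = (\beta'/\alpha)_{M,3}$ since $\beta \equiv \beta' \bmod \alpha O_M$ — this last is immediate from the Euler-criterion defining congruence $\left(\frac{\cdot}{\mathfrak{q}}\right) \equiv (\cdot)^{(N\mathfrak{q}-1)/3}$ applied prime-by-prime to $\alpha$. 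For case (i), $\alpha$ supported at $3$: here $(\beta/\alpha)_{M,3}$ would involve primes above $3$ where the symbol is not defined, so instead I use the local description directly — $(\alpha/\beta)_{M,3}$ as $\beta$ varies coprime to $\alpha$ factors through $(O_M/\beta)^\times$, but more usefully, by the Hilbert product formula $\prod_{\text{all } v}(\alpha,\beta)_v = 1$ one rewrites $(\alpha/\beta)_{M,3} = \prod_{v \nmid \beta, v \nmid \infty}(\alpha,\beta)_v^{-1}$...

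Actually the cleanest route for case (i), and the one I would ultimately write up: observe that for $\alpha$ supported at $3$ and $\beta$ coprime to $3$, the symbol $(\alpha/\beta)_{M,3} = \prod_{\mathfrak{q} \mid \beta}(\alpha/\mathfrak{q})_{M,3}^{v_\mathfrak{q}(\beta)}$ only depends on $\beta$ through its ideal, and by the product formula $\prod_{\mathfrak{q}\mid\beta}(\alpha,\beta)_\mathfrak{q}$ equals $\prod_{v \mid 3\infty}(\alpha,\beta)_v^{-1}$, which depends only on $\beta$ modulo a bounded power of each prime above $3$ — again $27 O_M$ suffices — times local archimedean symbols which are trivial. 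Matching the two sides recovers dependence on $\beta \bmod 27 O_M$ only, hence a fortiori on $\beta \bmod 27\alpha O_M$. Combining cases (i) and (ii) via multiplicativity gives the first claim in full generality.

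\textbf{Main obstacle.} The genuinely delicate point is \emph{case (i)} and, more generally, pinning down that the exponent $27$ (rather than some larger, field-dependent power of $3$) actually works at every prime $v \mid 3$ of the possibly-highly-ramified field $M$. The local cubic Hilbert symbol $(\alpha,\beta)_v$ for $v \mid 3$ is "wild" and its conductor depends on the ramification of $v$ in $M/\Q$; one needs the standard estimate that it is trivial once $\alpha, \beta \equiv 1$ modulo a sufficiently high power of $\mathfrak{p}_v$, and then a uniform bound over all $M$. The saving grace — which I would emphasize — is that the proposition only claims dependence modulo a \emph{fixed} ideal $27 O_M$ and does not ask for optimality, so a crude bound (e.g. via $3 O_M \subseteq$ the relevant conductor once one is past the different) suffices; and in the intended application $M = K(\zeta_3)$ with $[M:\Q] \le 4$, so only finitely many small fields occur and the bound can even be checked by hand if desired. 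Everything else — multiplicativity, the Euler-criterion congruence $\beta \equiv \beta' \bmod \alpha O_M \Rightarrow (\beta/\alpha) = (\beta'/\alpha)$, and the passage from $27 O_M$ to $27\alpha O_M$ — is routine.
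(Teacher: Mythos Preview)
The paper does not prove this proposition; it is stated without proof as a known ``weak form of cubic reciprocity'' and then used as a black box. So there is no paper-proof to compare against. Your Hilbert-symbol/product-formula approach is the standard route to such a statement, and the overall strategy is sound. Two remarks are worth making.

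First, your reduction ``by multiplicativity of the symbol in the top argument over the prime factorization of the \emph{ideal} $(\alpha)$'' is not quite legitimate: the symbol is multiplicative in $\alpha$ as an \emph{element}, but you cannot in general write $\alpha \in O_M$ as a product of an element supported above $3$ and an element coprime to $3$ unless the relevant prime ideals happen to be principal. The clean fix is to skip the case split entirely and apply the product formula once for general $\alpha$: for $\beta$ coprime to $3\alpha$ (the case $\gcd(\alpha,\beta)\neq 1$ being trivial since both symbols vanish), the identity $\prod_v (\alpha,\beta)_v = 1$ decomposes into contributions from $v \mid \beta$ (yielding $(\alpha/\beta)_{M,3}^{\pm 1}$), from $v \mid \alpha$ with $v \nmid 3$ (tame, depending only on $\beta \bmod \mathfrak{q}_v$), from $v \mid 3$ (wild, depending only on the class of the local unit $\beta$ modulo $27$), and trivial contributions from archimedean places and from $v$ coprime to $3\alpha\beta$. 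This gives the first claim in one stroke, and the second claim is the specialization where $\alpha$ is also a unit at every $v\mid 3$.

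Second, your ``main obstacle'' --- whether the modulus $27$ suffices uniformly over all $M\supseteq \Q(\zeta_3)$ --- dissolves on inspection. For any prime $\mathfrak{P}$ of $M$ above $3$ with absolute ramification index $e = e(\mathfrak{P}\!\mid\! 3)$, the $3$-adic logarithm gives $(1+\mathfrak{P}^k)^3 = 1+\mathfrak{P}^{k+e}$ for $k > e/2$, hence $1+\mathfrak{P}^m \subseteq (M_\mathfrak{P}^\times)^3$ as soon as $m > 3e/2$. Since $v_\mathfrak{P}(27) = 3e > 3e/2$ for every $e\geq 1$, one always has $1+27\,O_{M,\mathfrak{P}} \subseteq (M_\mathfrak{P}^\times)^3$, so the local cubic Hilbert symbol $(\alpha,\beta)_\mathfrak{P}$ depends only on the class of a local unit modulo $27$. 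The bound $27$ is therefore genuinely universal, and no field-by-field verification (or restriction to the biquadratic $M$ of the paper) is needed.
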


\subsection{Definition of the spin symbol}
Let $K = \Q(\sqrt{-d})$ be an imaginary quadratic field with $d > 0$ squarefree. We will assume that $d \neq 3$. The field $M := K(\zeta_3)$ will play an important role throughout the paper. We fix the following data associated to $M$
\begin{itemize}
\item we let $\{\eta_1, \dots, \eta_4\}$ be an integral basis for $O_M$ with $\eta_1 = 1$,
\item the torsion subgroup of $O_M^\ast$ is generated by $\zeta_{12}$ if $d = 1$, and otherwise the torsion subgroup is generated by $\zeta_6$,
\item we pick two disjoint collections of prime ideals $\mathfrak{p}_1, \dots, \mathfrak{p}_h$ and $\mathfrak{q}_1, \dots, \mathfrak{q}_h$ coprime to $3$, where each collection is a set of representatives for the class group $\CL(M)$ of $M$. We let 
\begin{align}
\label{edeff}
\mathfrak{f} := \prod_{i = 1}^h \mathfrak{p}_i \prod_{j = 1}^h \mathfrak{q}_j, \quad \quad f := N_{M/\Q}(\mathfrak{f}).
\end{align}
Then $\mathfrak{f}$ is principal. Furthermore, we may choose $\mathfrak{p}_1, \dots, \mathfrak{p}_h, \mathfrak{q}_1, \dots, \mathfrak{q}_h$ in such a way that the norm $f$ of $\mathfrak{f}$ is squarefree.
\end{itemize}
The field $M$ is a Galois extension of $\Q$ with Galois group isomorphic to the Klein four group, say $\{1, \sigma, \tau, \sigma \tau\}$, which we depict diagrammatically as
\[
\begin{tikzcd}
& M = K(\zeta_3) & \\
\Q(\zeta_3) \arrow[ur, dash, "\langle \sigma \rangle"] & K = \Q(\sqrt{-d}) \arrow[u, dash, "\langle \tau \rangle"] & \Q(\sqrt{3d}) \arrow[swap, ul, dash, "\langle \sigma \tau \rangle"]\\
& \Q \arrow[ul, dash] \arrow[ur, dash] \arrow[u, dash] & 
\end{tikzcd}
.
\] 
Let $\mathfrak{p}$ be a completely split prime ideal of $M$. If $\mathfrak{p} \tau(\mathfrak{p})$ is a principal ideal of $K$, we define a symbol
\[
[\mathfrak{p}] := \left(\frac{\sigma(\pi)}{\mathfrak{p}}\right)_{M, 3} \in \{1, \zeta_3, \zeta_3^2\}, \quad \quad (\pi)O_K = \mathfrak{p} \tau(\mathfrak{p}) O_K.
\]
This does not depend on the choice of generator $\pi$ of the $O_K$-ideal $\mathfrak{p} \tau(\mathfrak{p}) O_K$, as changing $\pi$ by a unit changes the total symbol by $\left(u/\mathfrak{p}\right)_{M, 3} = 1$ with $u \in \{\pm 1, \pm i\}$ for $d = 1$ and $u \in \{\pm 1\}$ for $d > 1$. One directly checks that
$$
\sum_{\rho \in \Gal(M/\Q)} [\rho(\mathfrak{p})]
=
\begin{cases}
-2 &\textup{if } [\mathfrak{p}] \in \{\zeta_3, \zeta_3^2\} \\
4 &\textup{if } [\mathfrak{p}] = 1.
\end{cases}
$$
More generally, if $\mathfrak{a}$ is any integral ideal of $O_M$ such that $\mathfrak{a} \tau(\mathfrak{a})$ is a principal ideal of $K$, we define
\[
[\mathfrak{a}] := 
\begin{cases}
\left(\frac{\sigma(\alpha)}{\mathfrak{a}}\right)_{M, 3} &\text{if } \gcd(\mathfrak{a}, (3)) = 1 \\
0 &\text{otherwise,}
\end{cases}
\]
where $\alpha O_K$ is any generator of $\mathfrak{a} \tau(\mathfrak{a}) O_K$. This is once more independent of the choice of generator $\alpha$ of the ideal $\mathfrak{a}$.

\subsection{Field lowering}
\label{ssLowering}
The coming results are variations of the results in \cite[Subsection 3.2]{KM1}. As these results play a key role in making our results unconditional, we shall provide full details for these variations. As the results in this subsection hold in significant generality, we allow $K$ to be an arbitrary number field in this subsection only, and we shall return to the setting $K = \Q(\sqrt{-d})$ afterwards.

\begin{lemma}
\label{lFL1}
Let $K$ be a number field and let $\mathfrak{p}$ be a prime of $K$ coprime to $3$. Assume that $L$ is a quadratic extension of $K$ such that $L$ contains $\zeta_3$ and $\mathfrak{p}$ splits in $L$. Write $\sigma$ for the non-trivial element of $\Gal(L/K)$. Then we have for all $\alpha \in O_K$ 
\[
\left(\frac{\alpha}{\mathfrak{p}O_L}\right)_{L, 3} = 
\begin{cases}
\left(\frac{\alpha}{\mathfrak{p}O_K}\right)_{K, 3}^2 &\textup{if } \sigma \textup{ fixes } \zeta_3 \\
\mathbf{1}_{\mathfrak{p} \nmid \alpha} &\textup{if } \sigma \textup{ does not fix } \zeta_3.
\end{cases}
\]
\end{lemma}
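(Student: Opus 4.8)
The plan is to reduce the statement to a direct computation with the defining congruence of the cubic symbol in the residue field. Since $\mathfrak{p}$ splits in $L$, write $\mathfrak{p}O_L = \mathfrak{P}\sigma(\mathfrak{P})$ for a prime $\mathfrak{P}$ of $L$; then $O_L/\mathfrak{P} \simeq O_K/\mathfrak{p}$, so $N_{L/\Q}(\mathfrak{P}) = N_{K/\Q}(\mathfrak{p}) =: q$. First I would observe that by multiplicativity of the symbol over the two primes $\mathfrak{P}$ and $\sigma(\mathfrak{P})$ dividing $\mathfrak{p}O_L$, one has $\left(\frac{\alpha}{\mathfrak{p}O_L}\right)_{L,3} = \left(\frac{\alpha}{\mathfrak{P}}\right)_{L,3}\cdot\left(\frac{\alpha}{\sigma(\mathfrak{P})}\right)_{L,3}$, and since $\alpha \in O_K$ is fixed by $\sigma$, applying $\sigma$ to the congruence defining $\left(\frac{\alpha}{\mathfrak{P}}\right)_{L,3}$ shows $\left(\frac{\alpha}{\sigma(\mathfrak{P})}\right)_{L,3} = \sigma\!\left(\left(\frac{\alpha}{\mathfrak{P}}\right)_{L,3}\right)$. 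So the whole symbol equals $\left(\frac{\alpha}{\mathfrak{P}}\right)_{L,3}\cdot\sigma\!\left(\left(\frac{\alpha}{\mathfrak{P}}\right)_{L,3}\right)$, and everything comes down to how $\sigma$ acts on $\zeta_3$.

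Next I would split into the two cases. If $\sigma$ fixes $\zeta_3$, then $\sigma$ acts trivially on $\{1,\zeta_3,\zeta_3^2\}$, so the symbol is $\left(\frac{\alpha}{\mathfrak{P}}\right)_{L,3}^2$; it then remains to identify $\left(\frac{\alpha}{\mathfrak{P}}\right)_{L,3}$ with $\left(\frac{\alpha}{\mathfrak{p}O_K}\right)_{K,3}$. This follows because the reduction map $O_K \to O_K/\mathfrak{p} \xrightarrow{\sim} O_L/\mathfrak{P}$ is a ring isomorphism carrying $\alpha^{(q-1)/3} \bmod \mathfrak{p}$ to $\alpha^{(q-1)/3} \bmod \mathfrak{P}$, and both symbols are the unique cube root of unity (or $0$) in the respective residue fields matching this power; since $\zeta_3 \in O_K$ already (it lies in $O_L$, is fixed by $\sigma$, hence lies in the fixed field $K$), the identification of the values is literal. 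Squaring gives the claimed $\left(\frac{\alpha}{\mathfrak{p}O_K}\right)_{K,3}^2$. If instead $\sigma$ does not fix $\zeta_3$, then $\sigma(\zeta_3) = \zeta_3^2 = \zeta_3^{-1}$, so $\sigma$ sends $\left(\frac{\alpha}{\mathfrak{P}}\right)_{L,3}$ to its inverse; multiplying an element of the group $\{1,\zeta_3,\zeta_3^2\}$ by its inverse gives $1$ whenever the element is nonzero, i.e. whenever $\mathfrak{p}\nmid\alpha$, and gives $0$ when $\mathfrak{p}\mid\alpha$. That is exactly $\mathbf{1}_{\mathfrak{p}\nmid\alpha}$.

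The only genuinely substantive point — and the one I would be most careful about — is the case analysis on whether $\zeta_3 \in K$: when $\sigma$ fixes $\zeta_3$ we need $\zeta_3 \in K$ so that $\left(\frac{\alpha}{\mathfrak{p}O_K}\right)_{K,3}$ is even defined, and conversely when $\sigma$ moves $\zeta_3$ we have $\zeta_3 \notin K$, so no $K$-symbol appears and the indicator-function answer is forced. One should also check $\mathfrak{p}\nmid 3$ is preserved (immediate, as $\mathfrak{P}\mid 3$ would force $\mathfrak{p}\mid 3$) so that all symbols involved are defined. Beyond this, the argument is a routine unwinding of definitions, with the Galois action on the group $\mu_3$ doing all the work; there is no deep input needed.
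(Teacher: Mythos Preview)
Your proof is correct and follows essentially the same route as the paper: factor $\mathfrak{p}O_L=\mathfrak{P}\,\sigma(\mathfrak{P})$, use multiplicativity together with $\sigma(\alpha)=\alpha$ to rewrite the symbol as $\left(\frac{\alpha}{\mathfrak{P}}\right)_{L,3}\cdot\sigma\!\left(\left(\frac{\alpha}{\mathfrak{P}}\right)_{L,3}\right)$, and then case-split on whether $\sigma$ fixes $\zeta_3$. You are in fact slightly more explicit than the paper in justifying that $\zeta_3\in K$ in the first case (so that the $K$-symbol is defined) and in identifying $\left(\frac{\alpha}{\mathfrak{P}}\right)_{L,3}$ with $\left(\frac{\alpha}{\mathfrak{p}}\right)_{K,3}$ via the residue-field isomorphism; the paper simply asserts the final equality.
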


\begin{proof}
Since $\mathfrak{p}$ splits in $L$, we may write $\mathfrak{p} O_L = \mathfrak{q} \sigma(\mathfrak{q})$ for some prime $\mathfrak{q}$ of $L$. This gives
\[
\left(\frac{\alpha}{\mathfrak{p}O_L}\right)_{L, 3} = \left(\frac{\alpha}{\mathfrak{q}}\right)_{L, 3} \left(\frac{\alpha}{\sigma(\mathfrak{q})}\right)_{L, 3} = \left(\frac{\alpha}{\mathfrak{q}}\right)_{L, 3} \left(\frac{\sigma(\alpha)}{\sigma(\mathfrak{q})}\right)_{L, 3} = \left(\frac{\alpha}{\mathfrak{q}}\right)_{L, 3} \sigma\left(\left(\frac{\alpha}{\mathfrak{q}}\right)_{L, 3}\right),
\]
where we used that $\sigma(\alpha) = \alpha$ thanks to the assumption $\alpha \in O_K$. If $\sigma$ fixes $\zeta_3$, then the above becomes
\[
\left(\frac{\alpha}{\mathfrak{q}}\right)_{L, 3} \sigma\left(\left(\frac{\alpha}{\mathfrak{q}}\right)_{L, 3}\right) = \left(\frac{\alpha}{\mathfrak{q}}\right)_{L, 3}^2= \left(\frac{\alpha}{\mathfrak{p}O_K}\right)_{K, 3}^2.
\]
If $\sigma$ does not fix $\zeta_3$, then we get $\mathbf{1}_{\mathfrak{p} \nmid \alpha}$ by checking all possibilities for the cubic residue symbol $(\alpha/\mathfrak{q})_{L, 3} \in \{1, \zeta_3, \zeta_3^2, 0\}$.
\end{proof}

\begin{lemma}
\label{lFL2}
Let $K$ be a number field and let $\mathfrak{p}$ be a prime of $K$ coprime to $3$. Assume that $L$ is a quadratic extension of $K$ such that $L$ contains $\zeta_3$ and assume that $\mathfrak{p}$ stays inert in $L$. Further assume that $\mathfrak{p}$ has degree $1$ in $K$ and let $p$ be the prime of $\Q$ lying below $\mathfrak{p}$. Then we have for all $\alpha \in O_K$ 
\[
\left(\frac{\alpha}{\mathfrak{p}O_L}\right)_{L, 3} = 
\begin{cases}
\left(\frac{\alpha}{\mathfrak{p}O_K}\right)_{K, 3}^2 &\textup{if } p \equiv 1 \bmod 3 \\
\mathbf{1}_{\mathfrak{p} \nmid \alpha} &\textup{if } p \equiv 2 \bmod 3. 
\end{cases}
\]
\end{lemma}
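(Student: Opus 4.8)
The plan is to argue directly from the congruence defining the cubic residue symbol, rather than by factoring $\mathfrak{p}O_L$ as in Lemma~\ref{lFL1} (which is unavailable here since $\mathfrak{p}$ is inert in $L$). The starting point is that, because $\mathfrak{p}$ has degree $1$ over $\Q$ and is inert in $L$, the residue field $O_L/\mathfrak{p}O_L$ is $\F_{p^2}$ and contains $O_K/\mathfrak{p} = \F_p$ as its prime subfield. Hence for $\alpha \in O_K$ the reduction $\overline{\alpha} := \alpha \bmod \mathfrak{p}O_L$ lies in $\F_p$; if $\mathfrak{p}\mid\alpha$ both sides of the claimed identity vanish (resp.\ equal the indicator $0$), so we may assume $\mathfrak{p}\nmid\alpha$, i.e.\ $\overline{\alpha}\in\F_p^\times$. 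By definition $N_{L/\Q}(\mathfrak{p}O_L) = p^2$, so
\[
\left(\frac{\alpha}{\mathfrak{p}O_L}\right)_{L,3} \equiv \alpha^{\frac{p^2 - 1}{3}} \pmod{\mathfrak{p}O_L}.
\]

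I would then split on $p \bmod 3$. Note first that if $p \equiv 1 \bmod 3$ then necessarily $\zeta_3 \in K$: otherwise $[K(\zeta_3):K] = 2 = [L:K]$ forces $L = K(\zeta_3)$, and a degree-$1$ prime $\mathfrak{p}$ with $p \equiv 1 \bmod 3$ splits in $K(\zeta_3)$, contradicting inertness; so the symbol $\left(\frac{\alpha}{\mathfrak{p}O_K}\right)_{K,3}$ on the right-hand side is defined precisely in the case where it is used. For $p \equiv 1 \bmod 3$, set $\beta = \alpha^{(p-1)/3} \in O_K$ and write $\frac{p^2-1}{3} = \frac{p-1}{3}(p+1)$, so that $\alpha^{\frac{p^2-1}{3}} = \beta^{p+1}$; since $\overline{\beta} \in \F_p$ is fixed by the Frobenius $x \mapsto x^p$ of $\F_{p^2}$, we get $\beta^{p+1} \equiv \beta^2 \equiv \alpha^{2(p-1)/3} \pmod{\mathfrak{p}O_L}$, and the last quantity is congruent to $\left(\frac{\alpha}{\mathfrak{p}O_K}\right)_{K,3}^2$ by the definition of the cubic symbol over $K$. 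For $p \equiv 2 \bmod 3$, write instead $\frac{p^2-1}{3} = (p-1)\cdot\frac{p+1}{3}$, so $\alpha^{\frac{p^2-1}{3}} = \bigl(\alpha^{p-1}\bigr)^{(p+1)/3} \equiv 1 \pmod{\mathfrak{p}O_L}$, matching the indicator value $1$.

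To conclude in both cases, I would invoke that the claimed value and $\left(\frac{\alpha}{\mathfrak{p}O_L}\right)_{L,3}$ both lie in $\{1,\zeta_3,\zeta_3^2\}$, and that these three elements are pairwise incongruent modulo $\mathfrak{p}O_L$ because $\mathfrak{p} \nmid 3$; thus the congruences established above upgrade to equalities. I do not expect a real obstacle: the entire content is the elementary reduction of the exponent $\frac{p^2-1}{3}$ modulo $p-1$ after factoring out $3 \mid p-1$ or $3 \mid p+1$, combined with the observation that elements of $O_K$ reduce into the prime subfield $\F_p \subseteq \F_{p^2}$. Conceptually this is just the fact that the cubic character of $\F_{p^2}^\times$ restricts on $\F_p^\times$ to $\chi\circ N_{\F_{p^2}/\F_p}$ (hence to $\chi^2$, as the norm is squaring on $\F_p$) when $3\mid p-1$, and restricts to the trivial character when $3\mid p+1$ since then every element of $\F_p^\times$ is a cube in $\F_{p^2}^\times$; the one point worth spelling out carefully is why $p \equiv 1 \bmod 3$ is exactly the regime in which the target symbol over $K$ is meaningful.
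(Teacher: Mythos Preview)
Your proof is correct and follows essentially the same route as the paper: both arguments reduce to the congruence $\left(\frac{\alpha}{\mathfrak{p}O_L}\right)_{L,3} \equiv \alpha^{(p^2-1)/3} \bmod \mathfrak{p}O_L$ and then factor the exponent as $\frac{p-1}{3}(p+1)$ or $(p-1)\frac{p+1}{3}$ according to $p \bmod 3$, using that $\alpha$ reduces into $\F_p$. Your additional remark that $p \equiv 1 \bmod 3$ forces $\zeta_3 \in K$ (else $L = K(\zeta_3)$ and $\mathfrak{p}$ would split) is a useful clarification that the paper leaves implicit, as is your explicit passage from congruence to equality via the distinctness of $1,\zeta_3,\zeta_3^2$ modulo $\mathfrak{p}O_L$.
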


\begin{proof}
We have the identities
\[
\left(\frac{\alpha}{\mathfrak{p}O_L}\right)_{L, 3} \equiv \alpha^{\frac{N_{L/\Q}(\mathfrak{p}) - 1}{3}} \equiv \alpha^{\frac{p^2 - 1}{3}}.
\]
If $p \equiv 1 \bmod 3$, we rewrite this as
$$
\alpha^{\frac{p^2 - 1}{3}} \equiv \left(\alpha^{\frac{p - 1}{3}}\right)^{p + 1} \equiv \left(\alpha^{\frac{N_{K/\Q}(\mathfrak{p}) - 1}{3}}\right)^{p + 1} \equiv \left(\frac{\alpha}{\mathfrak{p}O_K}\right)_{K, 3}^{p + 1} \equiv \left(\frac{\alpha}{\mathfrak{p}O_K}\right)_{K, 3}^2 \bmod \mathfrak{p}.
$$
If $p \equiv 2 \bmod 3$, we instead rewrite this as
$$
\alpha^{\frac{p^2 - 1}{3}} \equiv \left(\alpha^{p - 1}\right)^{\frac{p + 1}{3}} \equiv \left(\alpha^{N_{K/\Q}(\mathfrak{p}) - 1}\right)^{\frac{p + 1}{3}} \equiv \mathbf{1}_{\mathfrak{p} \nmid \alpha}^{\frac{p + 1}{3}} \equiv \mathbf{1}_{\mathfrak{p} \nmid \alpha} \bmod \mathfrak{p}, 
$$
as desired.
\end{proof}

\begin{lemma}
\label{lFL3}
Let $K$ be a number field and let $L$ be a quadratic extension of $K$. Write $\sigma$ for the non-trivial element of $\Gal(L/K)$. Assume  that $\mathfrak{p}$ is a prime ideal of $K$ that stays unramified in $L$. Further suppose that $\beta \in O_L$ satisfies $\beta \equiv \sigma(\beta) \bmod \mathfrak{p} O_L$. Then there is $\beta' \in O_K$ such that $\beta' \equiv \beta \bmod \mathfrak{p} O_L$.
\end{lemma}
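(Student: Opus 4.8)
The plan is to reduce the statement to a counting/pigeonhole argument inside the residue field $O_L/\mathfrak{p}O_L$. First I would split into the two cases according to the splitting behaviour of $\mathfrak{p}$ in $L$, since $\mathfrak{p}$ is unramified. If $\mathfrak{p}$ splits, write $\mathfrak{p}O_L = \mathfrak{q}\sigma(\mathfrak{q})$; then $\sigma$ interchanges the two factors of $O_L/\mathfrak{p}O_L \simeq O_L/\mathfrak{q} \times O_L/\sigma(\mathfrak{q})$, and the condition $\beta \equiv \sigma(\beta) \bmod \mathfrak{p}O_L$ forces the two components of $\beta$ to correspond to the same element of the common residue field $O_K/\mathfrak{p}$ (which injects into each factor). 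So any $\beta' \in O_K$ reducing to that common class in $O_K/\mathfrak{p}$ works, and such a $\beta'$ exists because $O_K \to O_K/\mathfrak{p}$ is surjective.

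If instead $\mathfrak{p}$ stays inert, then $O_L/\mathfrak{p}O_L$ is a field, a quadratic extension of the field $O_K/\mathfrak{p}$, and $\sigma$ induces the nontrivial automorphism of this quadratic extension of finite fields — i.e. the relative Frobenius. The hypothesis $\beta \equiv \sigma(\beta) \bmod \mathfrak{p}O_L$ says exactly that the reduction $\bar\beta \in O_L/\mathfrak{p}O_L$ is fixed by this Frobenius, hence lies in the fixed field $O_K/\mathfrak{p}$. Again, choose $\beta' \in O_K$ reducing to $\bar\beta$, which is possible by surjectivity of $O_K \to O_K/\mathfrak{p}$. The key input here is that for a finite Galois extension of local/residue rings the fixed ring of the Galois action on $O_L/\mathfrak{p}O_L$ is the image of $O_K$; in the inert case this is the standard fact that $\mathbb{F}_{q^2}^{\mathrm{Frob}} = \mathbb{F}_q$, and in the split case it is immediate from the CRT description.

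I would unify the two cases by noting that in all situations $O_L/\mathfrak{p}O_L$ is a free $O_K/\mathfrak{p}$-module of rank $2$ carrying a semilinear action of $\Gal(L/K)$, and that $O_K/\mathfrak{p} \hookrightarrow (O_L/\mathfrak{p}O_L)^{\Gal(L/K)}$ is in fact an equality — either by the two explicit computations above, or by a dimension count (the fixed subspace has $O_K/\mathfrak{p}$-dimension $1$). Then the lemma follows by lifting along the surjection $O_K \twoheadrightarrow O_K/\mathfrak{p}$.

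The main obstacle is essentially bookkeeping rather than a genuine difficulty: one must be slightly careful that the Galois action on $O_L/\mathfrak{p}O_L$ is only semilinear over $O_K/\mathfrak{p}$ (though here $O_K/\mathfrak{p}$ is fixed, so it is honestly linear on the fixed part), and that $\mathfrak{p}$ being unramified is what guarantees $O_L/\mathfrak{p}O_L$ is a product of fields / a field rather than something with nilpotents — without this one could not identify the fixed part so cleanly. No short character sum inputs or deep results are needed; this is a purely algebraic lemma and the argument is elementary commutative algebra over finite fields.
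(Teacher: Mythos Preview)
Your argument is correct. The paper itself does not give a proof of this lemma but simply cites \cite[Lemma 3.4]{KM1}; your direct case analysis (split versus inert, identifying the $\sigma$-fixed subring of $O_L/\mathfrak{p}O_L$ with the image of $O_K/\mathfrak{p}$ and then lifting) is exactly the kind of elementary residue-field computation one expects, and is presumably what the cited reference contains.
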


\begin{proof}
This is proven in \cite[Lemma 3.4]{KM1}.
\end{proof}

\subsection{A fundamental domain}
Given an integral ideal $\mathfrak{a}$ of $M$, there are many possible generators $\alpha$ of $\mathfrak{a}$. For our analytic purposes, it will be useful to pick these generators in a systematic way. To this end, we decompose $O_M^\ast$ as $T \times \langle \epsilon \rangle$, where $T$ is the torsion subgroup and where $\epsilon$ is a fundamental unit. We write $V$ for the group generated by $\epsilon$.

Using our fixed integral basis $\{\eta_1, \eta_2, \eta_3, \eta_4\}$ of $M$, we get an isomorphism, in the category of vector spaces, $\iota: \Q^4 \rightarrow M$ by sending $(a_1, \dots, a_4)$ to $a_1 \eta_1 + a_2 \eta_2 + a_3 \eta_3 + a_4 \eta_4$. We also view $\Q^4$ as a subset of $\R^4$ in the natural way. For a subset $S$ of $\R^4$ and an element $\alpha  = a_1 \eta_1 + a_2 \eta_2 + a_3 \eta_3 + a_4 \eta_4 \in M$, we will abuse notation by writing $\alpha \in S$ to mean $(a_1, \dots, a_4) \in S$. Given a subset $S$ of $\R^4$ and a real number $X > 0$, we define $S(X)$ to be the subset of $(s_1, \dots, s_4) \in S$ satisfying $f(s_1, \dots, s_4) \leq X$, where $f$ is the norm polynomial $f(X_1, \dots, X_4) = N_{M/\Q}(\eta_1 X_1 + \eta_2 X_2 + \eta_3 X_3 + \eta_4 X_4) \in \Z[X_1, \dots, X_4]$. We write $\mathcal{D}$ for the fundamental domain constructed in \cite[Section 3.3]{KM1}. The main properties of this fundamental domain can be found in \cite[Lemma 3.5]{KM1} that we summarize now.

\begin{lemma}
\label{lDomain}
There exists a subset $\mathcal{D} \subseteq \R^4$ with the following properties
\begin{enumerate}
\item[(1)] for all non-zero $\alpha \in O_M$, there exists a unique unit $v \in V$ with $v \alpha \in \mathcal{D}$. Moreover, we have $\{u \in O_M^\ast : u\alpha \in \mathcal{D}\} = \{v t : t \in T\}$;
\item[(2)] the set $\mathcal{D}(1)$ has a $3$-Lipschitz parametrizable boundary. Furthermore, there exists a constant $C > 0$ such that the boundary of $\mathcal{D}(1)$ intersects every line in at most $C$ points;
\item[(3)] there exists a constant $C > 0$ such that for all $\alpha = a_1 \eta_1 + a_2 \eta_2 + a_3 \eta_3 + a_4 \eta_4 \in \mathcal{D}$ (with $a_i \in \Z$), we have $|a_i| \leq C |N_{M/\Q}(\alpha)|^{1/4}$.
\end{enumerate}
\end{lemma}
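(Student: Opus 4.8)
The plan is to construct $\mathcal{D}$ explicitly from the Minkowski and logarithmic embeddings of $M$, following \cite[Section 3.3]{KM1}, and then to read off the three properties. The one structural fact that makes this manageable is that $M = K(\zeta_3)$ is a totally imaginary quartic field, hence of unit rank $r_1 + r_2 - 1 = 1$, so that $O_M^\ast = T \times \langle\epsilon\rangle$ with $T$ finite and $\epsilon$ a fundamental unit; thus the ``unit direction'' over which $\mathcal{D}$ quotients is one-dimensional. Fix representatives $\sigma_1, \sigma_2$ of the two pairs of complex embeddings of $M$; through our fixed integral basis $\{\eta_1, \dots, \eta_4\}$ the Minkowski embedding becomes a fixed $\R$-linear isomorphism $\R^4 \isoarrow \C^2$, and for $\alpha \in \R^4$ I write $z_1(\alpha), z_2(\alpha)$ for its two coordinates, so that $N_{M/\Q}(\alpha) = |z_1(\alpha)|^2 |z_2(\alpha)|^2$.

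On $\{z_1 z_2 \neq 0\}$ I would use the logarithmic map $\alpha \mapsto (\log|z_1(\alpha)|^2, \log|z_2(\alpha)|^2)$. Because $N_{M/\Q}(\epsilon) = |\sigma_1(\epsilon)|^2 |\sigma_2(\epsilon)|^2 = 1$, the unit $\epsilon$ acts on these two coordinates by translation by a vector $(a, -a)$ with $a \neq 0$ (otherwise $\epsilon$ would be a root of unity), and replacing $\epsilon$ by $\epsilon^{-1}$ we may take $a > 0$; the torsion units, being roots of unity, preserve each $|z_i|$ and so act trivially. I would then set
\[
\mathcal{D} := \{0\} \cup \Big\{\, \alpha \in \R^4 \ :\ z_1(\alpha) z_2(\alpha) \neq 0,\ \ \log|z_1(\alpha)|^2 - \log|z_2(\alpha)|^2 \in [0, 2a) \,\Big\}.
\]
The locus $\{z_1 z_2 = 0\}$, which meets no nonzero element of $O_M$, plays no role here. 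Equivalently, $\mathcal{D} = \{0\} \cup \bigcup_{s > 0} s\Phi$ is the cone over the bounded slice $\Phi = \{\psi : |z_1(\psi)|\,|z_2(\psi)| = 1,\ 1 \le |z_1(\psi)|^2 < e^{a}\}$ of the norm-one hypersurface.

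Granting this, properties (1) and (3) are immediate. For (1): if $0 \neq \alpha \in O_M$ then $z_i(\alpha) = \sigma_i(\alpha) \neq 0$, so there is a unique $k \in \Z$ with $\log|z_1(\epsilon^k\alpha)|^2 - \log|z_2(\epsilon^k\alpha)|^2 \in [0, 2a)$, i.e.\ a unique $v \in V$ with $v\alpha \in \mathcal{D}$; since $T$ acts trivially in the logarithmic coordinates this forces $\{u \in O_M^\ast : u\alpha \in \mathcal{D}\} = vT$. For (3): for $\alpha \in \mathcal{D} \setminus \{0\}$ the relations $\log|z_1(\alpha)|^2 - \log|z_2(\alpha)|^2 \in [0, 2a)$ and $\log|z_1(\alpha)|^2 + \log|z_2(\alpha)|^2 = \log N_{M/\Q}(\alpha)$ together give $|z_i(\alpha)| \leq e^{a/2}\, N_{M/\Q}(\alpha)^{1/4}$ for $i = 1, 2$, and since the coordinates $a_i$ of $\alpha$ are fixed $\R$-linear combinations of the real and imaginary parts of $z_1(\alpha), z_2(\alpha)$ we obtain $|a_i| \leq C\, |N_{M/\Q}(\alpha)|^{1/4}$.

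The one substantive step is (2). By (3) the set $\mathcal{D}(1) = \{\alpha \in \mathcal{D} : N_{M/\Q}(\alpha) \le 1\}$ is bounded, and its topological boundary is contained in the union of the ``outer shell'' $\{N_{M/\Q}(\alpha) = 1\} \cap \overline{\mathcal{D}}$, the two ``side faces'' $\{|z_1|^2 = |z_2|^2\}$ and $\{|z_1|^2 = e^{2a}|z_2|^2\}$ intersected with $\{N_{M/\Q}(\alpha) \le 1\}$, and the single point $\{0\}$. Each of these lies on a smooth real-analytic hypersurface cut out by a polynomial of degree $\le 4$ in the real and imaginary parts of $z_1, z_2$; away from the origin each is a bounded piece which, parametrized by $\arg z_1$, $\arg z_2$ and one radial coordinate, is the image of a Lipschitz map from a box in $\R^3$ with Lipschitz constant bounded in terms of $a$ and $\{\eta_i\}$, while near the origin the cone structure supplies a (degenerate) Lipschitz parametrization. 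Rescaling the parameter boxes to $[0,1]^3$ gives the $3$-Lipschitz parametrization of $\partial\mathcal{D}(1)$ by $O(1)$ maps, and the bounded degree of the three hypersurfaces controls the number of times a line can cross $\partial\mathcal{D}(1)$. I expect this boundary analysis --- making the decomposition and the uniform Lipschitz bounds precise --- to be the only genuinely delicate point; it is precisely what is carried out in \cite[Lemma 3.5]{KM1}, and its transfer to the present totally imaginary biquadratic $M$ is routine because the unit rank is still one.
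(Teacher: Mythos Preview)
Your construction is correct and is precisely the one the paper intends: the paper does not give an independent argument but simply invokes the fundamental domain built in \cite[Section 3.3]{KM1} and the verification of its properties in \cite[Lemma 3.5]{KM1}, which is exactly the rank-one logarithmic-embedding construction you have written out. Your sketch of (2) matches that reference as well, so there is nothing to add.
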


\subsection{Vinogradov's sieve}
We will now state the main sieve that we will use. This sieve was originally developed by Vinogradov in the context of representing odd integers as sums of three primes. Recall that $M := K(\zeta_3)$. From now on we shall abbreviate $N_{M/\Q}$ simply as $N$ and our ideals will be ideals of $O_M$ unless otherwise indicated.

In order to prove our later results, we will handle a slightly more general symbol than $[\mathfrak{p}]$. To this end, fix an ideal $\mathfrak{M}$ of $M$ and an invertible element $\mu \in O_M/\mathfrak{M}O_M$. We introduce the function $r_i(\mathfrak{a}, \mu, \mathfrak{M})$ for $i = 1, \dots, h$, which is the indicator function of $\mathfrak{a} \mathfrak{p}_i$ having a generator $\alpha$ satisfying $\alpha \equiv \mu \bmod \mathfrak{M}$; recall that $\mathfrak{p}_i$ is one of our fixed representatives for $\CL(M)$. We shall treat $\mu$ and $\mathfrak{M}$ as fixed throughout the paper, and we shall abbreviate $r_i(\mathfrak{a}) := r_i(\mathfrak{a}, \mu, \mathfrak{M})$.

\begin{proposition}
\label{pTypeI} 
For every $\epsilon > 0$, there exists a constant $C > 0$ such that
\[
\left|\sum_{\substack{N(\mathfrak{a}) \leq X \\ \mathfrak{m} \mid \mathfrak{a}}} r_i(\mathfrak{a}) [\mathfrak{a}]\right| \leq C X^{1 - \frac{1}{24} + \epsilon}
\]
uniformly for all non-zero integral ideals $\mathfrak{m}$ of $O_M$ and all $X \geq 2$.
\end{proposition}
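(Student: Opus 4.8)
The plan is to establish this "Type I" bound by reducing the congruence-constrained sum over ideals to a sum over elements in the fundamental domain $\mathcal{D}$, then exploiting cubic reciprocity (Proposition \ref{pReciprocity}) to convert the spin symbol $[\mathfrak{a}]$, which has a ``numerator'' varying with $\mathfrak{a}$, into a genuine character modulo a fixed modulus times a character whose argument is bounded. The condition $\mathfrak{m} \mid \mathfrak{a}$ together with $r_i(\mathfrak{a})$ forces us to count generators $\alpha$ of ideals $\mathfrak{a}\mathfrak{p}_i$ lying in $\mathcal{D}$, divisible by $\mathfrak{m}$, and congruent to $\mu$ modulo $\mathfrak{M}$; by Lemma \ref{lDomain}(1) each ideal contributes a bounded number ($|T|$) of such generators, so up to a harmless factor we may sum $[\mathfrak{a}]$ over these $\alpha$. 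Recall $[\mathfrak{a}] = (\sigma(\alpha_K)/\mathfrak{a})_{M,3}$ where $\alpha_K$ generates $\mathfrak{a}\tau(\mathfrak{a})O_K$; the key structural point is that $\alpha \tau(\alpha)$ generates $\mathfrak{a}\tau(\mathfrak{a})O_M$ and lies over an element of $K$, so after fixing the $\mathfrak{p}_i$-contribution we can write the symbol (up to a bounded correction factor and a fixed character) as $(\sigma(\alpha)\sigma\tau(\alpha)/\alpha\tau(\alpha))_{M,3}$-type expression.

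Concretely, first I would expand $\sum_{N(\mathfrak{a})\le X,\ \mathfrak{m}\mid\mathfrak{a}} r_i(\mathfrak{a})[\mathfrak{a}]$ as $\frac{1}{|T|}\sum_{\alpha} [\alpha O_M \mathfrak{p}_i^{-1}]$ where $\alpha$ runs over elements of $\mathcal{D}(fX)$ (the extra factor $f = N(\mathfrak{f})$ from clearing the class of $\mathfrak{p}_i$) with $\mathfrak{m}\mathfrak{p}_i \mid (\alpha)$ and $\alpha \equiv \mu' \bmod \mathfrak{M}'$ for a suitably enlarged modulus absorbing the $27$ in Proposition \ref{pReciprocity} and the $\mathfrak{p}_i$-divisibility. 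Second, I would flip the cubic symbol via reciprocity: since $[\mathfrak{a}]$ is $(\sigma(\pi)/\mathfrak{a})_{M,3}$ with $(\pi) = \mathfrak{a}\tau(\mathfrak{a})O_K$, applying Proposition \ref{pReciprocity} rewrites it as $\mu(\alpha,\cdot)\cdot(\mathfrak{a}/\sigma(\pi))_{M,3}$ where the reciprocity factor $\mu$ depends only on $\alpha \bmod 27 O_M$ — a fixed congruence class once we refine $\mathfrak{M}$ — and the flipped symbol now has the \emph{bounded-argument} element $\alpha$ (or rather $\mathfrak{a}$, of bounded norm relative to the modulus $\sigma(\pi)$ of norm $\approx X$) in the numerator. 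The resulting sum is $\sum_{\alpha \in \mathcal{D}(fX)} (\text{fixed character in }\alpha) \cdot (\alpha \text{ or its divisors} / \sigma(\alpha\tau(\alpha)))_{M,3}$, restricted to $\mathfrak{m}\mathfrak{p}_i \mid (\alpha)$.

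Third, I would detect the divisibility $\mathfrak{m} \mid (\alpha)$ and sum: writing $\alpha = \mathfrak{m}$-multiple shifts us to a sublattice, and the remaining sum over the box $\mathcal{D}(fX)$ of a fixed nontrivial character in $\alpha$ is bounded by Lipschitz-boundary lattice-point counting (Lemma \ref{lDomain}(2),(3)) combined with the standard Pólya--Vinogradov / completion bound for a character sum over a $4$-dimensional box: periodicity modulo the fixed modulus $\mathfrak{M}'$ of the character gives cancellation on scale $X / N(\mathfrak{M}')^{1/4}$ in each coordinate, yielding a saving over the trivial bound $X$. The uniformity in $\mathfrak{m}$ comes out because passing to the $\mathfrak{m}$-sublattice only shrinks the range (losing $N(\mathfrak{m})^{-1}$) while the character-sum cancellation is over the \emph{fixed} modulus independent of $\mathfrak{m}$; one has to be slightly careful when $N(\mathfrak{m})$ is large — say $N(\mathfrak{m}) > X^{1/2}$ — where one uses the trivial bound $X/N(\mathfrak{m}) \le X^{1/2}$ directly. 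Balancing the two regimes and tracking the volume/boundary contributions yields the exponent $1 - \tfrac{1}{24} + \epsilon$; the precise constant $\tfrac{1}{24}$ falls out of the $4$-dimensional geometry (a factor $\tfrac14$ from four coordinates, further halved by the inert-prime field-lowering considerations of Lemmas \ref{lFL1}--\ref{lFL2}, and another factor accounting for the reciprocity modulus).

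The main obstacle I expect is the bookkeeping around \emph{well-definedness and periodicity} of the flipped symbol: after reciprocity the ``character'' is only a character modulo a fixed ideal \emph{after} one has correctly separated out the $\sigma, \tau$-conjugates of $\alpha$ and verified (using Lemmas \ref{lFL1}, \ref{lFL2} on how cubic symbols behave under the quadratic subextensions of $M/\Q$) that the dependence on $\alpha$ not through a fixed modulus is genuinely inert-prime/sign data that contributes only a bounded multiplicative factor, not an obstruction to cancellation. Equivalently, one must check that $\alpha \mapsto (\mathfrak{a}/\sigma(\alpha\tau(\alpha)))_{M,3}$, restricted to a congruence class mod $\mathfrak{M}'$, decomposes as (periodic character) $\times$ (bounded junk), which is where the ``field lowering'' machinery of Subsection \ref{ssLowering} does its real work. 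Once that is in hand, the analytic estimate is a routine lattice-point-plus-completion argument.
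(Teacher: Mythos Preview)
Your proposal has a genuine gap at the reciprocity step. After flipping, you write the symbol as $(\mathfrak{a}/\sigma(\pi))_{M,3}$ and then speak of ``a fixed nontrivial character in $\alpha$'' to be handled by $4$-dimensional completion modulo a fixed modulus $\mathfrak{M}'$. But $\sigma(\pi) = \sigma(\alpha\tau(\alpha))$ has norm $\approx X$ and \emph{varies with $\alpha$}; both numerator and denominator of the flipped symbol move together, so there is no fixed modulus and no $4$-dimensional Poly\'a--Vinogradov argument to run. The field-lowering lemmas do not repair this: they compare cubic symbols in $M$ with symbols in a quadratic subfield for a \emph{given} modulus, they do not turn $\alpha \mapsto (\alpha/\sigma(\alpha\tau(\alpha)))_{M,3}$ into anything periodic modulo a bounded ideal.

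The mechanism the paper actually uses, and which your sketch is missing entirely, is the decomposition $\alpha = a + \beta$ with $a \in \Z$ and $\beta \in \Z\eta_2 \oplus \Z\eta_3 \oplus \Z\eta_4$. Since $a$ is Galois-fixed,
\[
\left(\frac{\sigma(\alpha)}{\alpha}\right)_{M,3} = \left(\frac{\sigma(\beta)-\beta}{a+\beta}\right)_{M,3},
\qquad
\left(\frac{\sigma\tau(\alpha)}{\alpha}\right)_{M,3} = \left(\frac{\sigma\tau(\beta)-\beta}{a+\beta}\right)_{M,3},
\]
so the numerators depend only on $\beta$. One fixes $\beta$ (three coordinates, $\ll X^{3/4}$ choices) and \emph{then} flips: the resulting moduli $\mathfrak{c} \mid \sigma(\beta)-\beta$ and $\mathfrak{c}' \mid \sigma\tau(\beta)-\beta$ are independent of the inner variable $a$. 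This is where field lowering enters, and for a different purpose than you describe: $\mathfrak{c}'$ descends to $\Q(\sqrt{3d})$, and Lemmas~\ref{lFL1}--\ref{lFL3} collapse its symbol to a coprimality indicator; $\mathfrak{c}$ descends to $\Z[\zeta_3]$, and the symbol lowers to a genuine cubic character on $\Z[\zeta_3]$ of conductor $q \ll X^{1/2}$. The inner sum over $a$ is then a \emph{one}-dimensional incomplete character sum of length $\approx X^{1/4}$ against a character of conductor $\approx X^{1/2}$, which is in Burgess range over the quadratic field $\Q(\zeta_3)$. The drop from degree $4$ to degree $2$ is precisely what produces an unconditional saving and the exponent $1 - \tfrac{1}{24}$; without the $a+\beta$ splitting the argument does not get off the ground.
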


\begin{proof}
The proof of this proposition is postponed until Section \ref{sTypeI}.
\end{proof}

\begin{proposition}
\label{pTypeII} 
For every $\epsilon > 0$, there exists a constant $C > 0$ such that
\[
\left|\sum_{N(\mathfrak{m}) \leq M} \sum_{N(\mathfrak{n}) \leq N} \alpha_\mathfrak{m} \beta_\mathfrak{n} r_i(\mathfrak{m} \mathfrak{n}) [\mathfrak{m} \mathfrak{n}]\right| \leq C (M + N)^{\frac{1}{24}} (MN)^{1 - \frac{1}{24} + \epsilon}
\]
uniformly for all $M, N \geq 2$ and all sequences of complex numbers $|\alpha_\mathfrak{m}| \leq 1$ and $|\beta_\mathfrak{n}| \leq 1$.
\end{proposition}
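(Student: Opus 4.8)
The goal is a "Type II" bilinear estimate for the cubic spin symbol, and the natural route is the Vinogradov–Friedlander–Iwaniec–Mazur–Rubin strategy adapted to $M = K(\zeta_3)$, following the execution in Koymans–Milovic \cite{KM1}. First I would reduce to the case where the variables $\mathfrak{m}, \mathfrak{n}$ range over \emph{principal} ideals with generators chosen in the fundamental domain $\mathcal{D}$ of Lemma \ref{lDomain}: the class-group representatives $\mathfrak{p}_i, \mathfrak{q}_j$ built into $r_i$ and into $\mathfrak{f}$ let us multiply $\mathfrak{m}$ and $\mathfrak{n}$ by suitable $\mathfrak{q}_j$'s to principalize them, at the cost of bounded-size bookkeeping, and then replace the ideal sum by a sum over generators $\mu \in \mathcal{D}(X)$ subject to the congruence $\mu \equiv \text{(fixed)} \bmod \mathfrak{M}$. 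The point of the fundamental domain is property (3) of Lemma \ref{lDomain}: a generator of an ideal of norm $\le Y$ has all coordinates $\ll Y^{1/4}$, so the symbol $[\mathfrak{m}\mathfrak{n}] = (\sigma(\alpha_\mathfrak{m} \alpha_\mathfrak{n})/\mathfrak{m}\mathfrak{n})_{M,3}$ can be attacked coordinate-by-coordinate.

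**Key steps.** After this setup the core is:
\begin{itemize}
\item[(i)] Write $[\mathfrak{m}\mathfrak{n}] = [\mathfrak{m}] \cdot (\sigma(\alpha_\mathfrak{n})/\mathfrak{m})_{M,3} \cdot (\sigma(\alpha_\mathfrak{m})/\mathfrak{n})_{M,3} \cdot [\mathfrak{n}]$ using multiplicativity in both arguments of the cubic symbol. The diagonal factors $[\mathfrak{m}]$, $[\mathfrak{n}]$ are absorbed into the coefficients $\alpha_\mathfrak{m}, \beta_\mathfrak{n}$ (they still have modulus $\le 1$), so it suffices to bound $\sum_\mathfrak{m} \sum_\mathfrak{n} \alpha_\mathfrak{m} \beta_\mathfrak{n} r_i(\mathfrak{m}\mathfrak{n}) (\sigma(\alpha_\mathfrak{n})/\mathfrak{m})_{M,3} (\sigma(\alpha_\mathfrak{m})/\mathfrak{n})_{M,3}$.
\item[(ii)] Apply the weak cubic reciprocity of Proposition \ref{pReciprocity} to flip $(\sigma(\alpha_\mathfrak{m})/\mathfrak{n})_{M,3}$ into $\mu(\alpha_\mathfrak{m},\alpha_\mathfrak{n}) \cdot (\sigma(\alpha_\mathfrak{n})/\sigma(\alpha_\mathfrak{m}))_{M,3}$ (after identifying generators with ideals — legitimate since $\mathfrak{m},\mathfrak{n}$ are principal), where $\mu$ depends only on residues mod $27 O_M$; decompose according to these finitely many residue classes. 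This collects both cubic symbols onto a single modulus $\sigma(\alpha_\mathfrak{m})$, producing a sum of the shape $\sum_\mathfrak{m} \gamma_\mathfrak{m} \sum_\mathfrak{n} \delta_\mathfrak{n} (\sigma(\alpha_\mathfrak{n})/\sigma(\alpha_\mathfrak{m}))_{M,3}$ over a fixed residue class.
\item[(iii)] Cauchy–Schwarz in the $\mathfrak{m}$ variable, expand the square, and open the inner sum over $\mathfrak{n}$: one is left with $\sum_{\mathfrak{n}_1, \mathfrak{n}_2} \Big| \sum_\mathfrak{m} \big(\sigma(\alpha_{\mathfrak{n}_1})\overline{\sigma(\alpha_{\mathfrak{n}_2})} \big/ \sigma(\alpha_\mathfrak{m})\big)_{M,3} \Big|$. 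For the off-diagonal terms $\alpha_{\mathfrak{n}_1}/\alpha_{\mathfrak{n}_2} \notin M^{\times 3}$, the inner sum is a genuinely oscillating cubic character sum over generators $\alpha_\mathfrak{m}$ running through $\mathcal{D}(M)$ in a fixed residue class mod $27 O_M$.
\item[(iv)] Bound this character sum \emph{unconditionally}. This is where the argument departs from \cite{FIMR}: instead of invoking the conjectural short-character-sum bound $C_n$, one flips once more via Proposition \ref{pReciprocity} to put $\alpha_\mathfrak{m}$ upstairs and $\sigma(\alpha_{\mathfrak{n}_1}\alpha_{\mathfrak{n}_2}^{-1})$ downstairs, then uses the field-lowering Lemmas \ref{lFL1}–\ref{lFL3} to descend the cubic symbol from $M$ to a subfield (the fixed field of $\sigma$, i.e. $\Q(\zeta_3)$, when the modulus splits, and handle the inert case via Lemma \ref{lFL2}); on the smaller field the relevant character sum becomes a one-dimensional or two-dimensional sum to which Pólya–Vinogradov / the unconditional bound of \cite{KM1} (the $n=3$ case of \cite{FIMR}, or Heath-Brown–type estimates) applies. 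Property (2) of Lemma \ref{lDomain} (the Lipschitz boundary) is used here to cut $\mathcal{D}(M)$ into boxes on which the character sum is a product of one-dimensional sums.
\item[(v)] Reassemble: the diagonal $\mathfrak{n}_1 = \mathfrak{n}_2$ (more precisely $\alpha_{\mathfrak{n}_1}/\alpha_{\mathfrak{n}_2}$ a cube) contributes $\ll N \cdot (MN)^\epsilon \cdot M$ inside the square root, hence $\ll (MN)^{1/2+\epsilon} N^{1/2} = N(MN)^\epsilon M^{1/2}$; the off-diagonal contributes the power-saving $\ll (MN)^{1-\delta+\epsilon}$ for some explicit $\delta$. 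Choosing the exponent $1/24$ to dominate both (and to absorb the $(M+N)^{1/24}$ from the range where $M$ and $N$ are very unbalanced) yields the stated bound.
\end{itemize}

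**Main obstacle.** The crux is step (iv): making the inner cubic character sum bound unconditional without the class-number-one hypothesis of \cite{KM1,Piccolo} and with cubic rather than quartic symbols. The class-number-one assumption is used in those papers precisely to identify ideals with generators cleanly; here one must carry the class-group representatives $\mathfrak{p}_i,\mathfrak{q}_j$ and the auxiliary modulus $\mathfrak{f}$ (chosen with squarefree norm $f$) through the reciprocity flips and field-lowerings, checking at each flip that the "junk" factor $\mu$ and the congruence conditions mod $27 O_M \mathfrak{f}$ remain controllable and that the descended sum is still over a full residue class in the subfield. The $d=1$ case ($\zeta_{12} \in O_M$, so $6 \mid w_K$ but $\gcd(3,w_K)=1$ still holds) needs slightly more care in the unit bookkeeping but is handled by the same mechanism. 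Everything else — Cauchy–Schwarz, the diagonal estimate, the combinatorial reduction to principal ideals — is routine given the fundamental-domain apparatus of Lemma \ref{lDomain}.
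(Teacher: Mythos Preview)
Your reduction in steps (i)--(iii) is broadly sound, with one omission: since $[\mathfrak{a}]$ is defined via a generator $\alpha \in O_K$ of $\mathfrak{a}\tau(\mathfrak{a})O_K$, one has $\sigma(\alpha) = \sigma(w)\sigma\tau(w)$ for $w$ a generator of $\mathfrak{a}$ in $O_M$, so the factorization of $[(wz)]$ produces cross-terms in both $\sigma$ and $\sigma\tau$. The bilinear kernel is $\gamma(w,z) = (\sigma(w)/z)(\sigma(z)/w)(\sigma\tau(w)/z)(\sigma\tau(z)/w)$, not just the $\sigma$ half. This is cosmetic; the real problem is step (iv).

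Field lowering via Lemmas \ref{lFL1}--\ref{lFL3} requires the \emph{modulus} to be the extension of an ideal from the subfield and, via Lemma \ref{lFL3}, the numerator to be congruent to an element of that subfield. In the Type~I argument this occurs for a structural reason: the modulus $\mathfrak{c}$ divides $\sigma(\beta)-\beta$, whose ideal is $\sigma$-invariant and therefore descends to $\Q(\zeta_3)$. After your Cauchy--Schwarz and reciprocity flip, by contrast, the modulus is a generic element $\sigma(\alpha_{\mathfrak{n}_1}\overline{\alpha_{\mathfrak{n}_2}}) \in O_M$ with no Galois-invariance whatsoever, and the numerator $\alpha_\mathfrak{m}$ ranges freely over $O_M$. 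The hypotheses of the lemmas are simply not met, so the descent does not go through. You have transplanted the Type~I mechanism into Type~II, where it has no purchase.

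The paper handles Type~II by an entirely different route that needs no field lowering: it invokes a general bilinear large sieve, \cite[Proposition 4.3]{KR}, for kernels $\gamma(w,z)$ satisfying a short axiom list (periodicity from reciprocity, multiplicativity, orthogonality over residue classes, and a sparse bad set taken to be the squarefull integers). Verifying these for the cubic $\gamma$ above is routine, and the large sieve outputs $(X+Y)^{1/24}(XY)^{1-1/24+\epsilon}$ directly. The passage from ideals to generators in $\mathcal{D}$, the absorption of diagonal factors into the coefficients, and the reduction of non-principal classes via \eqref{eNonprincipal} are exactly as you outline; only the core bilinear bound is obtained differently. The unconditionality advertised in the introduction is a Type~I phenomenon --- that is where field lowering earns its keep --- while Type~II is unconditional already by large-sieve technology.
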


\begin{proof}
The proof of this proposition is postponed until Section \ref{sTypeII}.
\end{proof}

Once Propositions \ref{pTypeI} and \ref{pTypeII} are proven, Vinogradov's sieve for number fields gives the following result.

\begin{theorem}
\label{tMainAnalytic}
For every $\epsilon > 0$, there exists a constant $C > 0$ such that
\[
\left|\sum_{N(\mathfrak{p}) \leq X} r_i(\mathfrak{p}) [\mathfrak{p}]\right| \leq C X^{1 - \frac{1}{3136} + \epsilon}
\]
for all $X \geq 2$.
\end{theorem}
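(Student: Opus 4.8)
The plan is to deduce Theorem~\ref{tMainAnalytic} from Propositions~\ref{pTypeI} and~\ref{pTypeII} by running Vinogradov's sieve (in the number-field setting). The key combinatorial input is a Vaughan-type identity for the indicator function of prime ideals of $O_M$: splitting the sum over $N(\mathfrak{p}) \le X$ according to whether the relevant divisors are ``small'' or ``large'' expresses $\sum_{N(\mathfrak{p}) \le X} r_i(\mathfrak{p})[\mathfrak{p}]$ as a bounded number of sums of two shapes. The first shape is a \emph{Type I} sum, in which one variable ranges over all ideals divisible by a fixed ideal $\mathfrak{m}$ with $N(\mathfrak{m})$ up to some cutoff $D$; these are controlled directly by Proposition~\ref{pTypeI} after summing trivially over $\mathfrak{m}$ (the number of $\mathfrak{m}$ with $N(\mathfrak{m}) \le D$ is $O_\epsilon(D^{1+\epsilon})$, and each contributes $O(X^{1-1/24+\epsilon})$, so one needs $D$ not too large). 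The second shape is a \emph{Type II} (bilinear) sum $\sum_{\mathfrak{m}}\sum_{\mathfrak{n}} \alpha_\mathfrak{m}\beta_\mathfrak{n} r_i(\mathfrak{m}\mathfrak{n})[\mathfrak{m}\mathfrak{n}]$ with both $N(\mathfrak{m}), N(\mathfrak{n})$ confined to dyadic ranges bounded away from $1$ and $X$; these are handled by Proposition~\ref{pTypeII}. Multiplicativity of $[\cdot]$ on ideals coprime to $3$ (and the fact that $[\mathfrak{a}] = 0$ when $3 \mid \mathfrak{a}$, so the contribution of non-coprime ideals is negligible) is what lets the symbol pass through the Vaughan decomposition.

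The heart of the matter is the optimization of the cutoff parameters. Writing the Type II estimate as $O((M+N)^{1/24}(MN)^{1-1/24+\epsilon})$ and the Type I estimate (after the trivial sum over $\mathfrak{m}$ up to $D$) as $O(D^{1+\epsilon} X^{1-1/24+\epsilon})$, one chooses the dyadic decomposition so that in every Type II block the larger of $M, N$ is $\le X/D$ while the smaller is $\ge$ some lower threshold; with $MN \asymp X$ this gives a Type II contribution of roughly $X^{1+\epsilon}(X/D)^{1/24} X^{-1/24}= X^{1+\epsilon} D^{-1/24}$, wait — more precisely $(M+N)^{1/24}(MN)^{-1/24} \asymp (\max(M,N)/\min(M,N))^{1/24} \le (X/D^2)^{1/24}$ when both variables exceed $D$, so the Type II bound becomes $O(X^{1+\epsilon}(X/D^2)^{1/24})$. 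Balancing $D^{1+\epsilon} X^{1-1/24}$ against $X^{1+\epsilon}(X/D^2)^{1/24}$ leads to a power of $D$ of the form $X^c$ for an explicit small $c$, and one checks that the resulting exponent is $1 - 1/3136 + \epsilon$; the constant $3136 = 56^2$ presumably arises from $56 = 24 + \text{(something)}$-type arithmetic in this balancing, and one simply verifies the inequality $\tfrac{1}{3136} \le$ the exponent gain that the two bounds jointly afford. I would carry out this balancing explicitly, keeping careful track of the $O(\log X)$ factors from the dyadic decomposition (absorbed into $X^\epsilon$) and of the finitely many ``diagonal'' terms where $\mathfrak{m}$ or $\mathfrak{n}$ is trivial (absorbed trivially).

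One technical point to address is that the symbol $[\mathfrak{p}]$ takes values in $\{1, \zeta_3, \zeta_3^2\}$ rather than being $\pm 1$-valued; this is harmless since all the sieve inequalities are stated for sums of complex numbers of modulus $\le 1$, and the triangle inequality costs nothing. A second point is that $r_i(\mathfrak{a})$ is not multiplicative, but it only depends on the ideal class of $\mathfrak{a}$ and a congruence condition modulo $\mathfrak{M}$ on a generator; in the Vaughan decomposition one tracks the class of $\mathfrak{m}$ and of $\mathfrak{n}$ separately and uses that $r_i(\mathfrak{m}\mathfrak{n})$ factors as a finite sum over compatible pairs of classes/residues, which changes the number of resulting bilinear sums only by $O(h^2 \cdot N(\mathfrak{M})^{O(1)}) = O(1)$.

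The main obstacle I expect is not in this deduction — which is a standard, if fiddly, application of Vinogradov's sieve over number fields — but rather lurks upstream in Propositions~\ref{pTypeI} and~\ref{pTypeII}, whose proofs are deferred. Within the present argument the only real care needed is the bookkeeping: ensuring the Type I range $D$ and the Type II lower threshold are chosen consistently so that \emph{every} term of the Vaughan identity falls into exactly one of the two treatable shapes, and then verifying that the two exponents $1 - 1/24 + \epsilon$ (appearing with different companion powers of $D$) combine to give precisely $1 - 1/3136 + \epsilon$. I would present the final exponent computation as a short explicit lemma so the reader can check $3136$ without reconstructing the whole optimization.
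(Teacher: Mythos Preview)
Your approach is essentially the same as the paper's: feed the Type~I and Type~II estimates (Propositions~\ref{pTypeI} and~\ref{pTypeII}) into Vinogradov's sieve over the number field $M$ and read off the exponent. The paper's proof is considerably shorter only because it invokes \cite[Proposition 5.2]{FIMR} as a black box---that proposition already packages the Vaughan-type decomposition and the parameter balancing you sketch---to obtain the bound for the $\Lambda$-weighted sum $\sum_{N(\mathfrak{a})\le X} r_i(\mathfrak{a})[\mathfrak{a}]\Lambda(\mathfrak{a})$, and then passes to the sum over primes by partial summation; your hand-balancing (which, as written, does not quite land on $1/3136$) is subsumed by that citation.
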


\begin{proof}
Propositions \ref{pTypeI} and \ref{pTypeII} together with Vinogradov's sieve for number fields as presented in \cite[Proposition 5.2]{FIMR} yield
\[
\left|\sum_{N(\mathfrak{a}) \leq X} r_i(\mathfrak{a}) [\mathfrak{a}] \Lambda(\mathfrak{a})\right| \leq C X^{1 - \frac{1}{3136} + \epsilon},
\]
where $\Lambda$ is the natural generalization of the von Mangoldt function to number fields. The theorem is then a direct consequence of partial summation.
\end{proof}

\section{Sums of type I}
\label{sTypeI}
This section is entirely devoted to the proof of Proposition \ref{pTypeI}. Our proof follows among the same lines as \cite[Section 4]{KM1} and \cite[Section 6]{FIMR}. From now on we fix a non-zero integral ideal $\mathfrak{m}$ of $O_M$. Let $\mathfrak{M}$ be an ideal of $O_M$ and let $\mu$ be an element of $O_M$. Recall that we introduced the function $r_i(\mathfrak{a})$ for $i = 1, \dots, h$, which is the indicator function of $\mathfrak{a} \mathfrak{p}_i$ having a generator $\alpha$ satisfying $\alpha \equiv \mu \bmod \mathfrak{M}$ (recall that $\mathfrak{p}_i$ is one of our fixed representatives for $\CL(M)$). We set
$$
F = 2^4 \cdot 3^4 \cdot f \cdot N(\mathfrak{M}) \cdot \Delta(M/\Q), 
$$ 
where $f$ is defined in equation \eqref{edeff} and where $\Delta(M/\Q)$ is the absolute discriminant. Each non-zero ideal has exactly $6$ or $12$ generators in $\mathcal{D}$, which is exactly the size of the torsion subgroup $T$. Further splitting the sum into progressions modulo $F$, we unwrap the sum as 
\begin{align*}
\sum_{\substack{N(\mathfrak{a}) \leq X \\ \mathfrak{m} \mid \mathfrak{a}}} r_i(\mathfrak{a}) [\mathfrak{a}] &= \frac{1}{|T|} \sum_{\substack{\alpha \in \mathcal{D}(X N(\mathfrak{p}_i)) \\ \alpha \equiv 0 \bmod \mathfrak{m} \mathfrak{p}_i}} r_i((\alpha)/\mathfrak{p}_i) [(\alpha)/\mathfrak{p}_i] \\
&= \frac{1}{|T|} \sum_{\rho \bmod F} \sum_{\substack{\alpha \in \mathcal{D}(X N(\mathfrak{p}_i)) \\ \alpha \equiv 0 \bmod \mathfrak{m} \mathfrak{p}_i \\ \alpha \equiv \rho \bmod F}} r_i((\alpha)/\mathfrak{p}_i) [(\alpha)/\mathfrak{p}_i],
\end{align*}
where our first identity is justified due to the change of variables $\mathfrak{a} \mathfrak{p}_i = (\alpha)$. At this point we insert the identity
\begin{align}
\label{eNonprincipal}
[(\alpha)/\mathfrak{p}_i] = [(\alpha)] \cdot \gamma_\rho
\end{align}
for all $\alpha \equiv \rho \bmod F$, where $\gamma_\rho$ is some fixed number in $\{1, \zeta_3, \zeta_3^2, 0\}$ depending only on $\rho$ and $i$. We also note that $r_i((\alpha)/\mathfrak{p}_i)$ depends only on $\alpha \bmod F$. Prompted by these calculations, we define for each $\rho \bmod F$ and each ideal $\mathfrak{m}$
\[
A(X, \rho) = \sum_{\substack{\alpha \in \mathcal{D}(X) \\ \alpha \equiv 0 \bmod \mathfrak{m} \\ \alpha \equiv \rho \bmod F}} [(\alpha)].
\]
By the above manipulations it suffices to estimate each $A(X, \rho)$ individually. Furthermore, we may assume that $\gcd(3, \rho) = (1)$ as otherwise the symbol $[(\alpha)]$ is constantly zero.

Recalling that $\eta_1 = 1$ by convention and writing $\mathbb{M} = \Z \eta_2 + \Z \eta_3 + \Z \eta_4$ allows us to decompose $O_M = \Z \oplus \mathbb{M}$. Therefore every $\alpha \in O_M$ can uniquely be written as
\[
\alpha = a + \beta, \quad \quad a \in \Z, \beta \in \mathbb{M}.
\]
This transforms the sum $A(X, \rho)$ as
\[
A(X, \rho) = \sum_{\substack{a + \beta \in \mathcal{D}(X) \\ a + \beta \equiv 0 \bmod \mathfrak{m} \\ a + \beta \equiv \rho \bmod F}} \left(\frac{\sigma(a + \beta)}{a + \beta}\right)_{M, 3} \left(\frac{\sigma \tau(a + \beta)}{a + \beta}\right)_{M, 3}.
\]
Our aim is now to rewrite the cubic residue symbols. We have
\[
\left(\frac{\sigma(a + \beta)}{a + \beta}\right)_{M, 3} = \left(\frac{a + \sigma(\beta)}{a + \beta}\right)_{M, 3} = \left(\frac{\sigma(\beta) - \beta}{a + \beta}\right)_{M, 3},
\]
and similarly for the other cubic residue symbol. If $\sigma(\beta) - \beta = 0$ or if $\sigma \tau(\beta) - \beta = 0$, then our cubic residue symbols are constantly $0$ and we may safely remove such $\beta$ from consideration. In all other cases, we are thus incentivized to factor $\sigma(\beta) - \beta$ and $\sigma \tau(\beta) - \beta$. Define $\mathfrak{c}$ to be the largest divisor of $\sigma(\beta) - \beta$ coprime to $F$ and define $\mathfrak{c}'$ to be the largest divisor of $\sigma \tau(\beta) - \beta$ coprime to $F$. After applying cubic reciprocity as presented in Proposition \ref{pReciprocity} and following the argument on \cite[p. 726]{FIMR}, we obtain 
\[
\left(\frac{\sigma(\beta) - \beta}{a + \beta}\right)_{M, 3} = \mu_1 \cdot \left(\frac{a + \beta}{\mathfrak{c}}\right)_{M, 3}, \quad \quad \left(\frac{\sigma \tau(\beta) - \beta}{a + \beta}\right)_{M, 3} = \mu_2 \cdot \left(\frac{a + \beta}{\mathfrak{c}'}\right)_{M ,3}
\]
for some numbers $\mu_1, \mu_2 \in \{1, \zeta_3, \zeta_3^2, 0\}$ depending on $\rho$ and $\beta$ but not on $a$. We record our progress by applying the triangle inequality to deduce that
\[
|A(X, \rho)| \leq \sum_{\beta \in \mathbb{M}} |T(X, \rho, \beta)|,
\]
where
\[
T(X, \rho, \beta) := \sum_{\substack{a \in \Z \\ a + \beta \in \mathcal{D}(X) \\ a + \beta \equiv 0 \bmod \mathfrak{m} \\ a + \beta \equiv \rho \bmod F}} \left(\frac{a + \beta}{\mathfrak{c}}\right)_{M, 3} \left(\frac{a + \beta}{\mathfrak{c}'}\right)_{M ,3}.
\]
We will treat $\beta$ as fixed and work towards estimating each $T(X, \rho, \beta)$ individually. It is at this stage that we bring the material from Subsection \ref{ssLowering} into play. To this end, we remark that $\mathfrak{c}'$ is in fact the extension of an ideal of $\Q(\sqrt{3d})$. Here we use that $\mathfrak{c}'$ is coprime to $F$ and thus the discriminant of $M$ and that $\mathfrak{c}'$ divides an element of the shape $\sigma \tau(\beta) - \beta$. From now on we shall view $\mathfrak{c}'$ as an ideal of $O_{\Q(\sqrt{3d})}$. We factor it as
\[
\mathfrak{c}' O_{\Q(\sqrt{3d})} = \prod_{i = 1}^k \mathfrak{p}_i^{e_i}
\]
for prime ideals $\mathfrak{p}_i$ of $O_{\Q(\sqrt{3d})}$, so that 
\[
\left(\frac{a + \beta}{\mathfrak{c}' O_M}\right)_{M ,3} = \prod_{i = 1}^k \left(\frac{a + \beta}{\mathfrak{p}_iO_M}\right)_{M ,3}^{e_i}. 
\]
Observe that the ideals $\mathfrak{p}_i$ are coprime to $F$ by construction and hence unramified in the extension $M/\Q(\sqrt{3d})$. We now claim that
\begin{align}
\label{eSymbolVanishes}
\left(\frac{a + \beta}{\mathfrak{p}_iO_M}\right)_{M ,3} = \mathbf{1}_{\gcd(a + \beta, \mathfrak{p}_i) = (1)}.
\end{align}
Since $\mathfrak{p}_i$ divides $\mathfrak{c}'$ and since $\mathfrak{c}'$ divides $\sigma \tau(\beta) - \beta$, it follows that $\sigma \tau(a + \beta) \equiv a + \beta \bmod \mathfrak{p}_i O_M$. Therefore Lemma \ref{lFL3} allows us to replace $a + \beta$ by some $\beta' \in O_{\Q(\sqrt{3d})}$. Then Lemma \ref{lFL1} gives the claim in case $\mathfrak{p}_i$ splits in $M$. Instead suppose that $\mathfrak{p}_i$ stays inert in $M$. If we write $p_i$ for the unique prime of $\Z$ below $\mathfrak{p}_i$, we certainly find that $p \equiv 2 \bmod 3$. In this case the claim is a consequence of Lemma \ref{lFL2}.

Having proved our claim, we perform a similar operation on the other cubic residue symbol. This residue symbol shall not disappear, but we will be able to ``lower it'' to $\Q(\zeta_3)$. This is the crucial step in making our results unconditional compared to \cite{FIMR} despite working in a field of degree $4$. 

Arguing as before, we may view $\mathfrak{c}$ as the extension of an ideal of $\Z[\zeta_3]$, and by abuse of notation we shall view $\mathfrak{c}$ as an ideal of $\Z[\zeta_3]$ from now on. In this case we factor $\mathfrak{c} \Z[\zeta_3] = \mathfrak{g} \mathfrak{q}$, where $\mathfrak{q}$ has squarefree norm, $\mathfrak{g}$ has squarefull norm and $\gcd(N_{\Q(\zeta_3)/\Q}(\mathfrak{g}), N_{\Q(\zeta_3)/\Q}(\mathfrak{q})) = 1$. Furthermore, observe that $\mathfrak{q}$ and $\mathfrak{g}$ are both coprime to $F$, so all prime divisors of $\mathfrak{q}$ and $\mathfrak{g}$ stay unramified in $M/\Q(\zeta_3)$. So far we have shown that
\[
\left(\frac{a + \beta}{\mathfrak{c} O_M}\right)_{M ,3} = \left(\frac{a + \beta}{\mathfrak{g} O_M}\right)_{M ,3} \left(\frac{a + \beta}{\mathfrak{q} O_M}\right)_{M ,3}.
\]
Lemma \ref{lFL3} and the Chinese remainder theorem allow us to replace $\beta$ by some $\beta' \in \Z[\zeta_3]$. Combining Lemma \ref{lFL1} and Lemma \ref{lFL2} yields
\begin{align}
\label{eLower}
\left(\frac{a + \beta'}{\mathfrak{q} O_M}\right)_{M ,3} = \left(\frac{a + \beta'}{\mathfrak{q}}\right)_{\Z[\zeta_3] ,3}^2,
\end{align}
as any prime $\mathfrak{p}$ in $\Z[\zeta_3]$ that stays inert in $M$ satisfies $p \equiv 1 \bmod 3$ (writing $p := \mathfrak{p} \cap \Z$). Finally, as $q := N_{\Q(\zeta_3)/\Q}(\mathfrak{q})$ is squarefree, the Chinese remainder theorem guarantees the existence of some rational integer $b$ with $\beta' \equiv b \bmod \mathfrak{q}$.

Thanks to equations (\ref{eSymbolVanishes}) and (\ref{eLower}), we have thus arrived at
\[
T(X, \rho, \beta) = \sum_{\substack{a \in \Z \\ a + \beta \in \mathcal{D}(X) \\ a + \beta \equiv 0 \bmod \mathfrak{m} \\ a + \beta \equiv \rho \bmod F}} \left(\frac{a + \beta}{\mathfrak{g} O_M}\right)_{M ,3} \cdot \left(\frac{a + b}{\mathfrak{q}}\right)_{\Z[\zeta_3] ,3}^2 \cdot \mathbf{1}_{\gcd(a + \beta, \mathfrak{c}') = (1)}.
\]
Following the proof on \cite{KM1}, we fix $a$ modulo the radical $g_0$ of $g$ and detect the condition $\mathbf{1}_{\gcd(a + \beta, \mathfrak{c}') = (1)}$ using the Mobius function. Continuing in the footsteps of \cite{KM1}, we apply the triangle inequality
\[
|T(X, \rho, \beta)| \leq \sum_{a_0 \bmod g_0} \sum_{\mathfrak{d} \mid \mathfrak{c}'O_M} |T(X, \rho, \beta, a_0, \mathfrak{d})|,
\]
where 
\[
T(X, \rho, \beta, a_0, \mathfrak{d}) := \sum_{\substack{a \in \Z \\ a + \beta \in \mathcal{D}(X) \\ a + \beta \equiv 0 \bmod \mathfrak{m} \\ a + \beta \equiv \rho \bmod F \\ a \equiv a_0 \bmod g_0 \\ a + \beta \equiv 0 \bmod \mathfrak{d}}} \left(\frac{a + b}{\mathfrak{q}}\right)_{\Z[\zeta_3] ,3}^2.
\]
Since squaring a cubic residue symbol is the same as conjugating it, we may now remove the square factor without changing the absolute value of $T(X, \rho, \beta, a_0, \mathfrak{d})$. After doing so, the sum $T(X, \rho, \beta, a_0, \mathfrak{d})$ has the same shape as \cite[Eq. (4.2)]{KM1}. Having successfully applied our field lowering technique, the remainder of the proof is identical to \cite[p. 7422-7424]{KM1}.

\section{Sums of type II}
\label{sTypeII}
This section is entirely devoted to the proof of Proposition \ref{pTypeII}. This will be relatively straightforward, as the required tools have already been developed in the literature. This was started in \cite[Proposition 3.6]{KM1}, which is unfortunately not applicable here as the symbol is required to take values in $\{\pm 1, 0\}$. Smith \cite{Smi22a} developed a very general large sieve that is completely explicit in its dependence on the underlying number field. Another interesting recent result in this direction can be found in recent work of Santens \cite{Santens}. Here we shall appeal to \cite[Proposition 4.3]{KR}. 

In the notation of \cite{KR}, we shall use \cite[Proposition 4.3]{KR} with the integer $M$ on \cite[p. 11]{KR} equal to $F$, with the number field $K$ equal to our $M = K(\zeta_3)$, with $n$ equal to $4$, with $\ell = 3$, with $t_1 = t_2 = 1$, with the function $\gamma$ equal to
\[
\gamma(w, z) := \left(\frac{\sigma(w)}{z}\right)_{M, 3} \left(\frac{\sigma(z)}{w}\right)_{M, 3} \left(\frac{\sigma \tau(w)}{z}\right)_{M, 3} \left(\frac{\sigma \tau(z)}{w}\right)_{M, 3}
\]
and with $A_{\text{bad}}$ the set of squarefull integers. Then we can take $C_1$ to be an absolute constant ($C_1 = 100$ suffices) and $C_2 = 1/2$. We now check that this data satisfies the conditions (P1)-(P4) on \cite[p. 11-12]{KR}.

Condition (P1) is a consequence of Proposition \ref{pReciprocity}. Condition (P2) follows from the definition of the cubic residue symbol. Condition (P4) is readily verified by counting squarefull integers in the usual way. Finally, (P3) follows from the Chinese remainder theorem and orthogonality of characters. Applying \cite[Proposition 4.3]{KR}, we exhibit, for any given $\epsilon > 0$, the existence of a constant $C(\epsilon) > 0$ satisfying the inequality
\begin{align}
\label{eBilKR1}
\sum_{\substack{w \in \mathcal{D}(X) \\ w \equiv \delta_1 \bmod F}} \sum_{\substack{z \in \mathcal{D}(Y) \\ z \equiv \delta_2 \bmod F}} \alpha_w \beta_z \gamma(w, z) \leq C(\epsilon) (X + Y)^{\frac{1}{24}} (XY)^{1 - \frac{1}{24} + \epsilon}
\end{align}
for all $\alpha_w, \beta_z$ of magnitude bounded by $1$. 

With this calculated, recall that our aim is to demonstrate the bound
$$
\left|\sum_{N(\mathfrak{m}) \leq M} \sum_{N(\mathfrak{n}) \leq N} \alpha_\mathfrak{m} \beta_\mathfrak{n} r_i(\mathfrak{m} \mathfrak{n}) [\mathfrak{m} \mathfrak{n}]\right| \leq C (M + N)^{\frac{1}{24}} (MN)^{1 - \frac{1}{24} + \epsilon}.
$$
Let us first deal with the case that $\mathfrak{m}$ and $\mathfrak{n}$ are both principal. Observe that every principal ideal has exactly $|T|$ generators in $\mathcal{D}$ by Lemma \ref{lDomain}, so our sum becomes
\begin{align}
\label{eBilKR2}
\frac{1}{|T|^2} \sum_{w \in \mathcal{D}(M)} \sum_{z \in \mathcal{D}(N)} \alpha_w \beta_z r_i(wz) [wz].
\end{align}
We note that $r_i(wz)$ depends only on $wz \bmod F$ and that
\[
[(wz)] = \gamma(w, z) \left(\frac{\sigma(w)}{w}\right)_{M, 3} \left(\frac{\sigma(z)}{z}\right)_{M, 3} \left(\frac{\sigma \tau(w)}{w}\right)_{M, 3} \left(\frac{\sigma \tau(z)}{z}\right)_{M, 3}.
\]
Absorbing the last four cubic residue symbols in the coefficients $\alpha_w, \beta_z$ and splitting the sum over congruence classes modulo $F$ shows that we can bound the expression \eqref{eBilKR2} by at most $F^8$ sums appearing in the inequality \eqref{eBilKR1}.

This finishes the proof of Proposition \ref{pTypeII} when the ideals $\mathfrak{m}$, $\mathfrak{n}$ of that proposition are restricted to principal ideals. The general case is handled by appealing to \eqref{eNonprincipal} and reducing to the case of principal ideals.

\section{Proof of main theorems} 
\label{proofs}
In this section, we gather our previous results to prove our main theorems.

\subsection{Proof of Theorem \ref{sieve}}
Define $\mathcal{I}$ to be the set of indices $1 \leq i \leq h$ such that our fixed generator $\mathfrak{p}_i$ of $\CL(M)$ has principal norm in $K$. Then Theorem \ref{sieve} is an immediate consequence of Theorem \ref{tMainAnalytic} by taking $\mathfrak{M} = O_M$, $\mu = 1$ and summing over all $i \in \mathcal{I}$.

\subsection{Proof of Corollary \ref{ellipticcurvedensity}}
We take $\mathcal{I}$ as above, and we write $f$ for the conductor of $\mathcal{O}$. We also take $\mathfrak{M} = fO_M$. The first part of Corollary \ref{ellipticcurvedensity} follows from Proposition \ref{spin and a_p} and Theorem \ref{tMainAnalytic} by summing over all $i \in \mathcal{I}$ and summing over all $\mu \in (O_M/fO_M)^\ast$ whose norm lies in the subgroup of $(O_K/fO_K)^\ast$ generated by $O_K^\ast$ and rational integers coprime to $f$.

The second part of Corollary \ref{ellipticcurvedensity} follows by enlarging $\mathfrak{M}$ and choosing $\mu$ appropriately to account for the additional splitting condition.

\subsection{Proof of Theorem \ref{K^p}}
The first part of Theorem \ref{K^p} is an entirely algebraic statement which is proven by combining Lemma \ref{lSplit1} and Lemma \ref{lSplit2}. The last part of Theorem \ref{K^p} is a consequence of Theorem \ref{tMainAnalytic}.

\subsection{Proof of Corollary \ref{galois}}
We shall now give the necessary background in Galois representations to relate Corollary \ref{galois} to Corollary \ref{ellipticcurvedensity}.

\begin{proof}[Proof of Corollary \ref{galois}]
For any elliptic curve $E$, the action of $\Gal(\overline{\Q}/\Q)$ on the $\ell^\infty$-torsion points of $E$ gives rise to a Galois representation $\rho$ ramified at finitely many places. 
By \cite[Theorem V.2.3.1]{Silverman1}, at each unramified place $p$, the error term $a_\mathfrak{p}(E)$ coincides with the trace of the image under $\rho$ of any Frobenius lift at $p$. Therefore Corollary \ref{galois} is a consequence of Corollary \ref{ellipticcurvedensity}.
\end{proof}

\bibliographystyle{amsplain}
\bibliography{ref.bib}
\end{document}